\crefname{section}{Section}{Sections}
\crefname{subsection}{\S}{\S\S}
\theoremstyle{plain}
\newtheorem{lemma}{Lemma}[section]
\newtheorem{proposition}[lemma]{Proposition}
\newtheorem{corollary}[lemma]{Corollary}
\newtheorem{theorem}[lemma]{Theorem}
\theoremstyle{nonumberplain}
\theoremstyle{plain}
\newtheorem{definition}[lemma]{Definition}
\newtheorem{example}[lemma]{Example}
\newtheorem{remark}[lemma]{Remark}
\crefname{definition}{definition}{definitions}
\crefname{ex}{example}{examples}
\crefname{remark}{remark}{remarks}
\crefname{convention}{convention}{conventions}
\crefname{claim}{claim}{claims}
\crefname{conjecture}{conjecture}{conjectures}
\crefname{lemma}{lemma}{lemmas}
\crefname{proposition}{proposition}{propositions}
\crefname{question}{question}{questions}
\crefname{corollary}{corollary}{corollaries}
\crefname{theorem}{theorem}{theorems}
\crefname{assumption}{assumption}{Assumptions}
\crefname{equation}{}{}
\theoremstyle{nonumberplain}
\newtheorem{proof}{Proof}
\newcommand\bC{{\mathbb C}}
\newcommand\bS{{\mathbb S}}
\newcommand\bZ{{\mathbb Z}}
\newcommand\cF{{\mathcal F}}
\newcommand\cO{{\mathcal O}}
\newcommand\cT{{\mathcal T}}
\newcommand{\qedhere}{\mbox{}\hfill\ensuremath{\blacksquare}}
\def\polhk#1{\setbox0=\hbox{#1}{\ooalign{\hidewidth
    \lower1.5ex\hbox{`}\hidewidth\crcr\unhbox0}}}
\newcommand{\bes}{\begin{equation*}}
\newcommand{\ees}{\end{equation*}}
\newcommand{\be}{\begin{equation}}
\newcommand{\ee}{\end{equation}}
\begin{document}

\title{Gauge freeness for Cuntz-Pimsner algebras}

\author{Alexandru Chirvasitu\footnote{University at Buffalo, \url{achirvas@buffalo.edu}}}

\date{}

\maketitle

\abstract{To every $C^*$ correspondence over a $C^*$-algebra one can associate a Cuntz-Pimsner algebra generalizing crossed product constructions, graph $C^*$-algebras, and a host of other classes of operator algebras. Cuntz-Pimsner algebras come equipped with a ``gauge action'' by the circle group and its finite subgroups.

For unital Cuntz-Pimsner algebras, we derive necessary and sufficient conditions for the gauge actions (by either the circle or its closed subgroups) to be free.}

\vspace{.5cm}

\noindent {\em Key words: Cuntz-Pimsner algebra, $C^*$ correspondence, Hilbert module, free action, strong grading}

\vspace{.5cm}

\noindent{MSC 2010: 46L05; 16S99}

\section*{Introduction}

Among actions of (topological) groups on topological spaces free ones play a special role it is difficult to overstate.

The fact that the quotient space $X\to X/G$ of such an action (by $G$ on $X$) is well behaved makes free actions a good starting point for defining equivariant versions of invariants in algebraic topology. Equivariant cohomology \cite{tu-eq} is an example, as is equivariant K-theory. The latter is typically defined by means of $G$-equivariant vector bundles on $X$, and when the action is free the construction is reducible to ordinary $K$-theory on $X/G$ \cite{seg}.  

The general driving principle behind freeness is that it entails good ``descent'' properties: $G$-equivariant structures on $X$ can be recast as analogous structures on the quotient space $X/G$. This applies for instance to vector bundles: for free actions there is an equivalence
\begin{equation*}
\begin{tikzpicture}[auto,baseline=(current  bounding  box.center)]
  \node[text width=4cm] (1) at (0,0) {vector bundles on X/G};
  \node[text width=4.5cm] (2) at (8,0) {$G$-equivariant vector bundles on $X$};
  \draw[->] (1) .. controls (2,1) and (6,1) .. node{$\simeq$} (2);
  \draw[<-] (1) .. controls (2,-1) and (6,-1) .. node[below] {$\simeq$} (2);
\end{tikzpicture}
\end{equation*}
See e.g. \cite[Proposition 4.2]{seg} and the discussion immediately preceding it. 

For all of these reasons, it is of interest to extend the concept of action freeness to the wider framework of {\it noncommutative} topology, where spaces are recast a possibly noncommutative $C^*$-algebras. This theme, for instance, drives much of \cite{phi-equivk}, where freeness features in the study of operator-algebraic equivariant K-theory.

What we here call `freeness' is referred to in \cite[$\S$7.1]{phi-equivk} as {\it saturation}. For an action of a compact abelian group $G$ on the $C^*$-algebra $A$ this means that
\begin{equation*}
  \overline{A^*_{\tau}A_{\tau}} = A^G,\ \forall \tau\in \widehat{G},
\end{equation*}
where $\widehat{G}$ is the discrete Pontryagin dual of $G$ and $A_{\tau}$ is the $\tau$-eigenspace of the action (see \Cref{se.prel} below for further details). We also refer the reader to \cite{phi-free} for more discussion of this non-commutative analogue to classical freeness for group actions on operator algebras. The concept of freeness is extensible even further, to the setting of actions of {\it quantum} groups on $C^*$-algebras; \cite{ell,freeness} are good references for this aspect of the story.

Whether classical or quantum, the interest in freeness ultimately stems from the same good descent properties it brings about alluded to above. For actions of an ordinary (as opposed to quantum) compact group $G$ on a $C^*$-algebra $A$, for instance, saturation is initially {\it defined } in \cite[7.1]{phi-equivk} as the requirement that a certain bimodule implement a Morita equivalence between the fixed-point algebra $A^G$ (noncommutative analogue to $X/G$) and the crossed product $A\rtimes G$ (see e.g. Definition 7.1.4 in loc. cit.; the notion was initially due to M. Rieffel).

The category of finitely generated projective $A\rtimes G$-modules is the noncommutative incarnation of equivariant bundles, so the story is indeed parallel to its classical counterpart. Although quantum group actions will not feature below, the same analogies persist when the acting group is quantum as well. This follows, for instance, from \cite[Definition 0.2 and the equivalence (1) $\Leftrightarrow$ (2) of Theorem 0.4]{freeness} combined with \cite[the equivalence (1) $\Leftrightarrow$ (2) of Theorem I]{sch}.  

The present paper is concerned with actions by the circle group and its finite subgroups $\bZ/k$ on the $C^*$-algebras introduced in \cite{kat-init}, modifying the initial construction of \cite{pim}. The initial data of the construction is a {\it $C^*$-correspondence} consisting of a $C^*$-algebra $A$, a Hilbert $A$-module $E$, and a morphism $\varphi:A\to B(E)$ into the $C^*$-algebra of adjointable operators on $E$.

This makes $E$ into an $A$-bimodule and allows the construction of the {\it Fock space}
\begin{equation*}
\cF(E)=A\oplus E \oplus E^{\otimes 2}\oplus \cdots
\end{equation*}
attached to the correspondence. One then construct the {\it Toeplitz algebra} $\cT_E$. This is the $C^*$-subalgebra of $B(\cF(E))$ generated by creation operators
\begin{equation*}
  T_e: E^{\otimes n}\to E^{\otimes (n+1)}\text{ defined by }
  \begin{cases}
    a\mapsto ea & \text{ if }n=0\\
    e_1\otimes\cdots \otimes e_n \mapsto e\otimes e_1\otimes\cdots\otimes e_n & \text{otherwise}
  \end{cases}
\end{equation*}
for $e\in E$ and $A$, where the latter acts on the left on each $E^{\otimes n}$ through $\varphi$. Finally, the Cuntz-Pimsner algebra $\cO_E$ is a certain quotient of $\cT_E$ (see \Cref{subse.cp} below for details). 

The construction generalizes crossed products, graph algebras and other popular classes of $C^*$-algebras (see e.g. \cite{kat-init} for further discussion). The algebras $\cT_E$ and $\cO_E$ come equipped with a ``gauge action'' by the circle group $bS^1\subset \bC^{\times}$ whereby $z\in \bS^1$ scales $E$ by $z$; this then also restricts to actions by the finite subgroups of $\bS^1$.

It is these gauge actions whose freeness we are interested in throughout. The paper is organized as follows:

\Cref{se.prel} contains some prefatory material setting the stage for the sequel.

In \Cref{se.fr} we prove the main results: \Cref{th.cp-iff,th.fin} characterize those unital $C^*$ correspondences $(A,E,\varphi)$ for which the gauge action by $\bS^1$ (respectively $\bZ/k\subset \bS^1$ for positive integers $k\ge 2$) are free. 

The same last section also contains a discussion, in \Cref{subse.prior}, of the relation between the main result of the present paper and earlier work in the literature on the freeness of gauge actions on graph $C^*$-algebras.

After reviewing briefly how the latter can be recovered as Cuntz-Pimsner algebras, we recall the characterization of graphs for which the various gauge actions are free due to \cite{chr} and re-derive these results as consequences of \Cref{th.cp-iff,th.fin} for graphs with finitely many vertices.

\section{Preliminaries}\label{se.prel}

\subsection{Cuntz-Pimsner algebras}\label{subse.cp}

As noted above, the main objects of study are the algebras attached to a Hilbert correspondence $E$ over a (unital, in our case) $C^*$-algebra $A$. introduced in \cite{kat-init} and further studied in \cite{kat-alg,kat-ideal}. The construction is a modification of that in \cite{pim}, and the two coincide when the left action of $A$ on $E$ is faithful.

We now briefly sketch the construction. First, recall:

\begin{definition}
  Let $A$ be a $C^*$-algebra. A {\it correspondence} over $A$ is a (right) Hilbert module $E$ over $A$ together with a morphism $\varphi:A\to B(E)$ into the algebra of adjointable operators on $E$.

  The correspondence is {\it faithful} if $\varphi$ is one-to-one and {\it full} if the closed span of the elements
  \begin{equation*}
    A\ni \braket{x|y},\ x,y\in E
  \end{equation*}
  (which in general is an ideal of $A$) is all of $A$.

  The correspondence is {\it finitely generated} (or FG for short) if $E$ is so as a right $A$-module.
\end{definition}

\begin{remark}\label{re.pfg}
  Finite generation is equivalent to the ideal $K(E)\subseteq B(E)$ of compact operators being unital, i.e. all of $B(E)$; it is also equivalent to $E$ being projective and finitely generated as a right $A$-module (see \cite[Theorem 15.4.2]{wo} and surrounding discussion). 
\end{remark}

To a $C^*$ correspondence $(A,E,\varphi)$ \cite{kat-init} associates a $C^*$-algebra denoted here somewhat abusively by $\cO_E$. While a bit of a misnomer, we will refer to these as {\it Cuntz-Pimsner algebras}, which phrase we will occasionally abbreviate to CP. The ambiguity stems from the fact that the construction differs somewhat from that of \cite{pim}, but the present paragraph will serve as sufficient warning to the reader.  

\begin{remark}
  For us, the underlying $C^*$-algebra $A$ of a correspondence is always unital, as is the left action via the morphism $\varphi:A\to B(E)$. We occasionally remind the reader of this convention by referring to the correspondence itself as {\it unital}. 
\end{remark}

The construction of $\cO_E$ is recalled in \cite[$\S$3]{kat-alg}. One starts by defining the {\it Pimsner-Toeplitz} algebra $\cT_E$ attached to $E$ as being generated by the creation and annihilation operators $e$ and $e^*$, $e\in E$ on the {\it Fock space}
\begin{equation*}
  \mathcal{F}(E):= A\oplus E\oplus E^{\otimes 2}\oplus\cdots. 
\end{equation*}
The tensor powers $E^{\otimes n}$ are Hilbert correspondences using the two actions of $A$ on $E$, and the direct sum is one of Hilbert correspondences (see also \cite[$\S$4]{kat-alg}).

One can then define (injective, it turns out) morphisms $\psi_n: K(E^{\otimes n})\to \cT_E$ by
\begin{equation*}
  K(E^{\otimes n})\ni \ket{e_1\otimes\cdots\otimes e_n}\bra{f_1\otimes\cdots\otimes f_n} \mapsto e_1\cdots e_n f^*_1\cdots f^*_n \in \cT_E. 
\end{equation*}
Define the ideal $J_E$ of $A$ by
\begin{equation*}
  J_E= \varphi^{-1}(K(E))\cap \ker(\varphi)^{\perp} = \{a\in A\ |\ a\ker(\varphi) = \ker(\varphi)a = 0\text{ and }\varphi(a)\in K(E)\},
\end{equation*}
where in general, for an ideal $I\subseteq A$, its orthogonal complement $I^\perp$ is the ideal of elements that annihilate $I$ by multiplication (see e.g. \cite[Definition 3.2]{kat-alg}). 

Finally, we have

\begin{definition}\label{def.cp-def}
  The {\it Cuntz-Pimsner algebra} $\cO_E$ associated to the $C^*$ correspondence $(A,E,\varphi)$ is the quotient of the Pimsner-Toeplitz algebra $\cT_E$ by the ideal generated by
  \begin{equation*}
    a - \psi_1(\varphi(a))
  \end{equation*}
  as $a$ ranges over $J_E$. 
\end{definition}

The surjection $\cT_E\to \cO_E$ induces (again, one-to-one) morphisms  $K(E^{\otimes n})\to \cO_E$; by a slight notational abuse we once more denote them by $\psi_n$ respectively. We rely on context to distinguish between these and their identically-named counterparts with codomain $\cT_E$.  

For both $\cT_E$ and $\cO_E$ the span of the elements
\begin{equation*}
  a\in A,\quad e_1\cdots e_n f^*_1\cdots f^*_m,\ e_i,f_j\in E,\ m,n\in \bZ_{\ge 0} \text{ and }m+n>0
\end{equation*}
is a dense $*$-subalgebra.

\begin{definition}\label{def.gg}
  The {\it gauge action} on $\mathcal{T}_E$ or $\mathcal{O}_E$ is the action of $\bS^1$ on $\cO_E$ and $\cT_E$ whereby $z\in \bS^1\subset \bC$ scales $e\in E$ by $z$.

  We also refer to the restrictions to $\bZ/k\subset \bS^1$ as gauge actions, and when having to distinguish call the $\bS^1$-action {\it full}.  
\end{definition}

\subsection{Graph algebras}\label{subse.gr-cp}

Note that graph $C^*$-algebras (e.g. \cite[Definition 1 and surrounding discussion]{flr}) can be recovered as Cuntz-Pimsner algebras $\cO_E$. Our reference for this will be \cite[$\S$3.4]{kat-init}. 

For a directed graph $\Gamma$ with vertex set $\Gamma^0$, edge set $\Gamma^1$ and source and range maps
\begin{equation*}
  s,r:\Gamma^1\to \Gamma^0
\end{equation*}
we have

\begin{definition}\label{def.gr-cast}
  The {\it graph $C^*$-algebra} $C^*(\Gamma)$ is the universal $C^*$-algebra generated by vertex projections $p_v$, $v\in \Gamma^0$ and partial isometries $s_\gamma$, $\gamma\in \Gamma^1$ subject to the following relations
  \begin{itemize}
  \item $s^*_{\gamma}s_{\gamma}=p_{r(\gamma)}$;
  \item for every vertex $v$, the ranges of $s_{\gamma}$ for $s(\gamma)=v$ are mutually orthogonal and dominated by $p_{s(\gamma)}$;
  \item if the set $s^{-1}(v)$ is non-empty and finite then
    \begin{equation*}
      \sum_{\gamma\in s^{-1}(v)} s_{\gamma}s^*_{\gamma}. 
    \end{equation*}
  \end{itemize}
\end{definition}

Now set $A=C_0(\Gamma^0)$ ($C^*$-algebra of functions vanishing at infinity on the discrete space $\Gamma^0$) and
\begin{equation*}
  E=\left\{f:E^1\to \bC\ |\ \Gamma^0\ni v\mapsto \sum_{r(\gamma)=v}|f(\gamma)|^2\in C_0(\Gamma^0)\right\}. 
\end{equation*}
The right action
\begin{equation*}
  E\times A\ni (f,\xi)\mapsto f\triangleleft \xi\in E
\end{equation*}
giving the Hilbert module structure is defined by
\begin{equation*}
  (f\triangleleft\xi)(\gamma) = f(\gamma)\xi(r(\gamma)). 
\end{equation*}
Similarly, the left action $\triangleright$ defining $\varphi:A\to B(E)$ is given by
\begin{equation*}
  (\xi\triangleright f)(\gamma) = \xi(s(\gamma))f(\gamma).  
\end{equation*}
The $A$-valued inner product on $E$ is
\begin{equation*}
  \braket{f|g} = \left(\Gamma^0\ni v\mapsto \sum_{r(\gamma)=v}\overline{f(\gamma)}g(\gamma)\in \bC\right) \in C_0(\Gamma^0).
\end{equation*}

This defines a correspondence $(A,E,\varphi)$ over $A$ which we sometimes denote somewhat abusively by $E(\Gamma)$. With all of this in place, $C^*(\Gamma)$ is isomorphic to $\cO_{E(\Gamma)}$ (see \cite[$\S$3.4]{kat-init}). This relationship between graph and CP algebras will resurface later in \Cref{subse.prior}, when we address connections to existing literature.

$C^*(\Gamma)$ is unital if and only if $\Gamma^0$ is finite (e.g. \cite[Proposition 1.4]{kpr}), so we will be concerned primarily with graphs with finitely many vertices. 

\subsection{Free actions}\label{subse.fr}

Consider an action of a compact group $G$ on a unital $C^*$-algebra $A$, cast as a coaction
\begin{equation*}
  \rho: A\to A\otimes C(G). 
\end{equation*}

Recall e.g. \cite[Definition 2.4]{ell} or \cite[Definition 0.1]{freeness}:

\begin{definition}\label{def.free}
  The action is {\it free} if the map defined by
  \begin{equation*}
    A\otimes A\ni a\otimes b\mapsto (a\otimes 1)\rho(b)\in A\otimes C(G)
  \end{equation*}
has norm-dense range.   
\end{definition}

As confirmed by \cite[Theorem 2.9]{ell}, this recovers the classical notion of freeness when $A=C(X)$ for a compact Hausdorff space $X$. \cite{ell,freeness} both work in the context of compact {\it quantum} groups acting on $A$; we do not need the more elaborate framework of quantum groups here, given that we are concerned with actions by $\bS^1$ an its finite subgroups only.

For compact abelian groups freeness is also equivalent to the notion of {\it saturation} of \cite[Definition 5.2]{phi-free} (see also \cite[Definition 5.9 and Theorem 5.10]{phi-free}). We review this briefly. 

\begin{definition}\label{def.sat}
  Let $G$ be a compact abelian group acting on a $C^*$-algebra $A$. For every character $\tau$ of $G$ denote by $A_\tau$ the $\tau$-eigenspace of the action:
\begin{equation*}
  A_\tau=\{a\in A\ |\ ga = \tau(g)a,\ \forall g\in G\}. 
\end{equation*}
The action is {\it saturated} if $A^*_{\tau}A_{\tau}$ is dense in the fixed-point subalgebra $A_1=A^G$ for all $\tau\in \widehat{G}$. 
\end{definition}
Freeness is equivalent to saturation, which will be our preferred incarnation of the former throughout the paper.

\section{Free gauge actions}\label{se.fr}

The current sections contains the main results of the paper, characterizing those $C^*$ correspondences $(A,E,\varphi)$ for which the various gauge actions (full, by $\bS^1$, or finite, by its various subgroups $\bZ/k$) are free.

\subsection{Full action}\label{subse.full}

The analogue of \cite[Proposition 2]{s-w03} for CP algebras is

\begin{proposition}\label{pr.cp-suff}
Let $(A,E,\varphi)$ be a faithful, full, FG $C^*$ correspondence. Then, the gauge action of $\mathbb{S}^1$ on $\cO_E$ is free.   
\end{proposition}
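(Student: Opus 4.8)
The plan is to verify the saturation condition of \Cref{def.sat} for the gauge $\bS^1$-action on $\cO_E$: for each character $\tau = (z \mapsto z^n)$, $n \in \bZ$, we must show that $(\cO_E)_\tau^*(\cO_E)_\tau$ is dense in the fixed-point algebra $(\cO_E)^{\bS^1}$. Since $(\cO_E)^{\bS^1}$ is the closure of the span of the ``balanced'' elements $\psi_k(\ket{x}\bra{y})$ with $x,y \in E^{\otimes k}$ (together with $A$, which is $\psi_0$), and the $n$-th eigenspace is spanned by elements $\psi$-images of $\ket{x}\bra{y}$ with $x \in E^{\otimes(k+n)}$, $y \in E^{\otimes k}$ for $n \ge 0$ (and the adjoint picture for $n < 0$), it suffices by symmetry (passing to adjoints swaps $\tau$ and $\bar\tau$) to treat $n \ge 0$, and in fact $n = 1$ will generate the rest, but it is cleanest to argue for general $n \ge 0$ directly.

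First I would reduce to showing that $1_A \in (\cO_E)^{\bS^1}$ lies in the closure of $(\cO_E)_n^*(\cO_E)_n$, since once the unit is approximated, multiplying by arbitrary balanced elements of $(\cO_E)^{\bS^1}$ gives density of the product space in all of $(\cO_E)^{\bS^1}$ (here unitality of $A$ is doing real work). Concretely, I want to approximate $1_A$ by finite sums $\sum_i \xi_i^* \xi_i$ where each $\xi_i \in (\cO_E)_n$. The natural candidates are built from $E^{\otimes n}$: because the correspondence is FG, by \Cref{re.pfg} the identity operator on $E^{\otimes n}$ is compact, so there is a finite ``frame'' $\{u_i\} \subset E^{\otimes n}$ with $\sum_i \ket{u_i}\bra{u_i} = \id_{E^{\otimes n}}$ in $K(E^{\otimes n}) = B(E^{\otimes n})$. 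The element $\xi_i := \psi_n(\ket{u_i}\bra{\text{---}})$ — more precisely, taking $\xi_i$ to be the creation-type operator $u_i$ realized in $\cO_E$, so $\xi_i \in (\cO_E)_n$ — then satisfies $\sum_i \xi_i^* \xi_i = \braket{\text{frame relation}}$, and the point is that this equals $1_A$ in $\cO_E$: here is exactly where faithfulness and fullness enter, via the Cuntz-Pimsner covariance relation $a = \psi_1(\varphi(a))$ on $J_E$, and the fact that for a faithful FG correspondence $J_E = A \ni 1_A$, so $1_A = \psi_1(\varphi(1_A)) = \psi_1(\id_E)$, and iterating, $1_A = \psi_n(\id_{E^{\otimes n}}) = \sum_i \psi_n(\ket{u_i}\bra{u_i}) = \sum_i \xi_i^*\xi_i$ after identifying $\psi_n(\ket{u}\bra{v})$ with the product of the corresponding creation and annihilation operators. (Fullness is what guarantees, or helps guarantee, the relevant inner products exhaust $A$; in the faithful FG case $\ker\varphi = 0$ so the $\perp$-condition in $J_E$ is vacuous and one only needs $\varphi(A) \subseteq K(E)$, which is automatic by FG-ness.)

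Thus the core identity is $1_A = \sum_i \xi_i^* \xi_i$ with $\xi_i \in (\cO_E)_n$, which gives $1_A \in (\cO_E)_n^*(\cO_E)_n$ exactly — not merely in the closure — and then density of $(\cO_E)_n^*(\cO_E)_n$ in $(\cO_E)^{\bS^1}$ follows formally: any balanced spanning element $b$ of $(\cO_E)^{\bS^1}$ equals $b \cdot 1_A = \sum_i (b\,\xi_i^*)\xi_i$, and one checks $b\xi_i^* \in (\cO_E)_n$ and $\xi_i \in (\cO_E)_n$, so $b \in (\cO_E)_n^*(\cO_E)_n$; taking closed spans finishes it. For $n < 0$ apply the adjoint. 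Hence the action is saturated, i.e.\ free.

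The main obstacle — really the only subtle point — is pinning down precisely that the frame relation $\sum_i \ket{u_i}\bra{u_i} = \id_{E^{\otimes n}}$ in $K(E^{\otimes n})$ transports under $\psi_n$ to the equation $1_A = \sum_i \xi_i^* \xi_i$ in $\cO_E$, which requires (a) knowing $\psi_n$ is well-defined and multiplicative in the right sense on $\cO_E$ — available from the construction recalled before \Cref{def.cp-def} — and (b) correctly matching the operator $\ket{u_i}\bra{v_i} \in K(E^{\otimes n})$ with the word $u_i v_i^*$ in the creation/annihilation generators, and verifying that $\xi_i^* \xi_i$ (a length-$n$ annihilation times a length-$n$ creation, landing in the ``degree zero, inner-product'' part) is $\psi_n(\ket{u_i}\bra{u_i})$ composed appropriately — equivalently that $\sum_i \xi_i^*\xi_i = \sum_i \braket{u_i | u_i}$-type expression collapses to $\psi_n(\id) = 1_A$ by covariance. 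I would handle this by working inside the dense $*$-subalgebra of $\cO_E$ described after the definition of $\psi_n$, where all these manipulations are purely algebraic, and by invoking that $J_E = A$ in the faithful FG case so that the defining relation of $\cO_E$ forces $1_A = \psi_1(\id_E)$, then inducting on $n$ via $E^{\otimes(n+1)} \cong E \otimes_A E^{\otimes n}$ and compatibility of frames under internal tensor product.
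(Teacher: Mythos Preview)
There is a genuine confusion in your argument that leaves one of the two required cases unproved. By the definition of $\psi_n$ recalled just before \Cref{def.cp-def}, one has $\psi_n(\ket{u}\bra{v}) = uv^*$ (creation followed by annihilation), so the frame identity $\sum_i\ket{u_i}\bra{u_i}=\id_{E^{\otimes n}}$ transports under covariance to
\[
  1_A=\psi_n(\id_{E^{\otimes n}})=\sum_i \xi_i\,\xi_i^*,
\]
not $\sum_i \xi_i^*\xi_i$ as you write. Indeed $\xi_i^*\xi_j=\braket{u_i|u_j}\in A$, and $\sum_i\braket{u_i|u_i}$ need not equal $1_A$ (take $E=A^2$ with the standard frame: the sum is $2$). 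So what you actually obtain is $1_A\in B_nB_n^*$ for $n\ge 1$, which is saturation for the \emph{negative} characters. Your appeal to ``symmetry'' and ``for $n<0$ apply the adjoint'' does not cover the other side: taking adjoints of $\sum_i\xi_i\xi_i^*=1_A$ gives the same identity back.

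The missing half is precisely the paper's $d=-1$ step, and this is where fullness is genuinely used (not, as you suggest, in verifying $J_E=A$, which follows from faithful $+$ FG alone). Fullness says the inner products $\braket{e|f}=e^*f$ span $A$ densely, so $1_A\in\overline{B_1^*B_1}$; iterating then gives $1_A\in\overline{B_n^*B_n}$ for $n\ge 1$. Once you correct the order in the frame argument and add this separate fullness step, your proof lines up with the paper's: the paper does exactly these two cases ($d=1$ via faithful $+$ FG and the compactness of $\id_E$, $d=-1$ via fullness), only it stays at $n=\pm1$ rather than passing to general $n$.
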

\begin{proof}
  Write $B=\cO_E$. We have to show that $1\in B$ belongs to the closed linear span of $B_d B^*_d$ for $d=\pm 1$.

  {\bf ($d=1$)} The elements of $E\subset B$ have degree one, so it will suffice to argue that $1\in A\subset B$ can be approximated by sums of the form
  \begin{equation}\label{eq:eifi}
    \sum_i e_i f^*_i\in B,\ e_i, f_i\in E. 
  \end{equation}
The fact that $E$ is FG and faithful means that $1\in B(E)$ is compact, i.e. in the closed linear span of the elementary compact operators $\ket{e}\bra{f}$ for $e,f\in E$. These, however, get identified respectively with $ef^*$ through the canonical map $K(E)\to B$ resulting from the $C^*$-correspondence structure. Since $(A,E,\varphi)$ is faithful $1\in A$ is identified in $B$ with its counterpart in $K(E)=B(E)$, and hence sums of the form (\ref{eq:eifi}) do indeed approximate the identity in $B$ arbitrarily well. 
  
{\bf ($d=-1$)} For $e,f\in E$ we have the identification
\begin{equation*}
  B\ni f^*e = \braket{f|e}\in A\subset B,
\end{equation*}
so our assumption that the closed span of such elements is all of $A$ implies that the latter is contained in the closure of $B^*_1 B_1$.
\end{proof}

We also have a partial converse to \Cref{pr.cp-suff}:

\begin{proposition}\label{pr.cp-nec}
  Suppose the gauge $\mathbb{S}^1$-action on $\cO_E$ is free. Then, the $C^*$ correspondence is faithful and FG. 
\end{proposition}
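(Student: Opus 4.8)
The plan is to prove the contrapositive in two pieces: if the correspondence fails to be faithful, or fails to be FG, then the $d=1$ eigenspace cannot generate all of $A$ (hence not $1$) inside the closed span of $B_1 B_1^*$. The key identification to exploit throughout is the one recorded in the construction: the degree-one part $B_1$ of $\cO_E$ is spanned by the images of $E$ together with products $e_1\cdots e_n f_1^*\cdots f_m^*$ with $n-m=1$, and $B_1 B_1^*$ lands in the fixed-point algebra $B_0$, which contains $A$ and the images $\psi_n(K(E^{\otimes n}))$. More precisely, $\overline{B_1 B_1^*}$ is the closed span of elements of the form $e g^*$ (degree $1+(-1)=0$), i.e. it is exactly the image of $\overline{K(E)}$ under $\psi_1$ (one uses the $C^*$-algebraic fact that the closed span of $\ket{e}\bra{g}$, $e,g\in E$, is all of $K(E)$). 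So freeness at $d=1$ forces $1\in A$ to lie in $\overline{\psi_1(K(E))}\subseteq \cO_E$.

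For the \textbf{faithfulness} half: suppose $\ker\varphi\ne 0$. Pick a nonzero $a\in\ker\varphi$. Then $a$ kills every $E^{\otimes n}$ from the left, and in $\cO_E$ the Katsura quotient only identifies $a'$ with $\psi_1(\varphi(a'))$ for $a'\in J_E=\varphi^{-1}(K(E))\cap(\ker\varphi)^\perp$; since $a\in\ker\varphi$, we have $a\notin J_E$ unless $a=0$. I would argue that $a$ (or rather its image in $\cO_E$) is orthogonal to $\psi_1(K(E))$: for any compact operator $k$ on $E$ and any $\xi\in E^{\otimes m}$, the computation $\psi_1(k)\cdot(a\,\xi)$ and $a\cdot(\psi_1(k)\xi)$ both involve the left action of $a$, which annihilates the output of $k\in K(E)\subset B(E)$ acting into the first tensor leg — more carefully, one checks $a^*\psi_1(k)=0$ in $\cO_E$ using $\varphi(a^*)K(E)=0$ (which follows from $a^*\in\ker\varphi$). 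Hence $a^*(1-\psi_1(\varphi(1)))\ne$ ... wait: if $1\in\overline{\psi_1(K(E))}$ then $a=a\cdot 1\in\overline{a\cdot\psi_1(K(E))}=\{0\}$, contradicting $a\ne0$. So faithfulness follows. The clean way to phrase this is: the map $A\to\cO_E$ is injective (a known property of Katsura's construction), and $A\cap\overline{\psi_1(K(E))}$ corresponds, inside $B(E)$, to $\varphi(A)\cap\overline{K(E)}\cdot(\text{something})$; I expect the slickest route is to realize $\cO_E$ (or a quotient detecting degree $0$) faithfully on a concrete space where $a$ acts by $\varphi(a)=0$ on $E^{\otimes\ge1}$ but nonzero on the $A=E^{\otimes0}$ summand, whereas $\psi_1(K(E))$ acts as $0$ on that summand.

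For the \textbf{FG} half: suppose $E$ is not finitely generated, equivalently (by \Cref{re.pfg}) $K(E)\subsetneq B(E)$, i.e. $1_{B(E)}\notin K(E)$. Since $K(E)$ is a closed ideal, $\overline{\psi_1(K(E))}$ is a closed ideal of $\psi_1(B(E))$-flavored objects but, crucially, $1_A$ maps into $\cO_E$ and its image cannot lie in $\overline{\psi_1(K(E))}$: if it did, then applying a suitable faithful representation of $\cO_E$ compressed to the Fock-type module, $1$ would act as a limit of compact operators on (a piece containing) $E$, forcing $1_{B(E)}\in K(E)=\overline{K(E)}$, contradiction. One has to be slightly careful that $\psi_1$ on $\cO_E$ remains injective (stated in the excerpt) so that $\overline{\psi_1(K(E))}\cong K(E)$ is still a proper corner, and that the canonical inclusion $A\hookrightarrow\cO_E$ together with $\varphi$ gives $1_A\mapsto\psi_1(\varphi(1))$ only when $1\in J_E$, which happens exactly when $\varphi$ is faithful \emph{and} $\varphi(1)=1_{B(E)}\in K(E)$ — i.e. exactly the faithful-FG case.

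\textbf{Main obstacle.} The delicate point is making rigorous the claim ``$1_A\in\overline{\psi_1(K(E))}$ inside $\cO_E$ forces faithfulness and $1_{B(E)}\in K(E)$,'' because $\cO_E$ has undergone the Katsura quotient and $a-\psi_1(\varphi(a))$ is killed for $a\in J_E$; I must show this quotient does not accidentally put $1_A$ into $\overline{\psi_1(K(E))}$ in the non-(faithful-FG) case. The right tool is the gauge-invariant uniqueness theorem / the explicit description of the degree-$0$ part of $\cO_E$ as a direct limit (Katsura), or equivalently a carefully chosen covariant representation on a Fock-type space on which degree-$0$ elements act diagonally blockwise: on such a representation, $\psi_1(K(E))$ acts on the $E^{\otimes n}$ blocks ($n\ge1$) by compacts (on the first leg) and trivially on the $A$-block, while $1_A$ acts as the identity on the $A$-block — so $1_A\notin\overline{\psi_1(K(E))}$ unless the $A$-block is absent or the compacts are everything, pinning down exactly faithfulness and FG. Assembling this representation and checking it factors through $\cO_E$ (i.e. respects the Katsura relations) is the technical heart; everything else is the soft ideal-theoretic bookkeeping above.
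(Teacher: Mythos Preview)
Your faithfulness argument is essentially the paper's: if $a\in\ker\varphi$ is nonzero then $aB_1=0$ (since every monomial in $B_1$ begins with some $e\in E$ on the left and $ae=\varphi(a)e=0$), so $a\cdot\overline{B_1B_1^*}=0$; but $a\cdot 1=a\ne 0$, contradicting $1\in\overline{B_1B_1^*}$. That part is fine.

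The genuine gap is your identification $\overline{B_1B_1^*}=\overline{\psi_1(K(E))}$. This is false: $B_1$ is spanned by monomials $e_1\cdots e_n f_1^*\cdots f_{n-1}^*$ for \emph{all} $n\ge 1$, not just $n=1$, and multiplying two such gives elements of $\psi_n(K(E^{\otimes n}))$ for arbitrarily large $n$. For instance $(e_1e_2f_1^*)(g_1^*)\in\psi_2(K(E^{\otimes 2}))$, which in general does not lie in $\psi_1(K(E))$. The correct statement is
\[
\overline{B_1B_1^*}=\overline{\bigcup_{n\ge 1}\psi_n(K(E^{\otimes n}))}=\mathcal{B}_{[1,\infty]}
\]
in Katsura's notation. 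Your FG argument, which hinges on ``$1\in\psi_1(K(E))\Rightarrow 1_{B(E)}\in K(E)$,'' therefore does not go through as written, and the Fock-representation workaround you sketch as the ``main obstacle'' is exactly the point you would still have to prove.

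The paper handles FG differently and more structurally: from $1\in\overline{B_1B_1^*}=\mathcal{B}_{[1,\infty]}$ it invokes Katsura's computation of the gauge-fixed algebra, namely the isomorphism
\[
\mathcal{B}_{[0,\infty]}/\mathcal{B}_{[1,\infty]}\cong A/\varphi^{-1}(K(E)),
\]
which is the bottom exact sequence of \cite[Proposition~5.16]{kat-alg}. Triviality of the left-hand side forces $\varphi(A)\subseteq K(E)$, hence $1_{B(E)}=\varphi(1_A)\in K(E)$ (using that $\varphi$ is unital), which is FG. This replaces your proposed hand-built Fock representation with an off-the-shelf structural result; once you know the correct description of $\overline{B_1B_1^*}$, the FG conclusion is one line.
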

\begin{proof}
  We prove the two claims separately.

  {\bf Faithfulness.} Suppose the structure map $\varphi:A\to B(E)$ inducing the left action of $A$ on $E$ is {\it not} one-to-one and let $0\ne a\in \ker \varphi$. Then, for each $e\in E\subset B-\cO_E$ we have $ae=0$ in $B$. We thus have
  \begin{equation*}
    B\ni ae_1\cdots e_n f^*_1\cdots f^*_{n-1}=0,\ \forall e_i,f_j\in E,\ \forall n\ge 1. 
  \end{equation*}
  Since the degree-one component $B_1$ is the closed span of such words consisting of $e$s and $f$s we have $aB_1=0$, contradicting the fact that the action is free and hence $1\in\overline{B_1 B^*_1}$. 
  
  {\bf FG.} The freeness assumption implies in particular that $1\in \overline{B_1 B^*_1}$. Using the fact that $B_1$ is the closed span of elements of the form
  \begin{equation*}
    e_1\cdots e_n f^*_1\cdots f^*_{n-1},\ \forall e_i,f_j\in E,\ n\ge 1,
  \end{equation*}
we conclude that $1$ is in the closed linear span of elements 
\begin{equation}\label{eq:enfn}
  e_1\cdots e_n f^*_1\cdots f^*_{n},\ \forall e_i,f_j\in E,\ n\ge 1.
\end{equation}
As observed before, $ef^*$ for $e,f\in E$ is the image of the compact operator $\ket{e}\bra{f}\in K(E)$ through the canonical map $K(E)\to \cO_E$. The same remark applies to larger $n$: the element (\ref{eq:enfn}) is the image of the compact operator
\begin{equation*}
  \ket{e_1\otimes\cdots \otimes e_n}\bra{f_1\otimes \cdots\otimes f_n} \in K(E^{\otimes n})\subset B. 
\end{equation*}
For $m\ge 0$, the $C^*$-subalgebra of $\cO_E$ generated by the images of $K(E^{\otimes n})$ with $n\ge m$ is denoted by $\mathcal{B}_{[m,\infty]}$ in \cite[$\S$5]{kat-alg}. The bottom exact sequence of \cite[Proposition 5.16]{kat-alg} then says that we have an isomorphism
\begin{equation}\label{eq:kat-cong}
  \mathcal{B}_{[0,\infty]}/\mathcal{B}_{[1,\infty]}\cong A/\varphi^{-1}(K(E)). 
\end{equation}
On the other hand though, our assumption that
\begin{equation*}
  A\ni 1\in \overline{\mathrm{span}}\{e_1\cdots e_n f^*_1\cdots f^*_{n}\ |\ e_i,f_j\in E,\ n\ge 1\} = \mathcal{B}_{[1,\infty]}
\end{equation*}
means precisely that the left hand side of \Cref{eq:kat-cong} is trivial. It follows that $K(E)$ contains $1\in A$ and hence is unital, meaning that indeed $E$ is algebraically finitely generated. 
\end{proof}

We note that the two necessary conditions from \Cref{pr.cp-nec} are in general not sufficient:

\begin{example}\label{ex.not-art}
  Let $A$ be the product $\ell^{\infty}(\mathbb{N})$ (in the category of $C^*$-algebras) of countably many copies of $\mathbb{C}$, where $\mathbb{N}$ is the set of all non-negative integers. We set $E=\chi A$, where $\chi\in A$ is the characteristic function of the subset $\mathbb{N}_{\ge 1}\subset \mathbb{N}$ of {\it positive} integers.

  Finally, in order to complete the description of $E$ as a $C^*$ correspondence over $A$, let
  \begin{equation*}
    \varphi:A\to B(E) = \chi A\chi = \chi A
  \end{equation*}
  be the isomorphism induced by the shift: $f\in \ell^{\infty}$ is mapped to the function taking the value $0$ at $0\in \mathbb{N}$ and $f(n-1)$ at $n\ge 1$.

  The FG property is obvious ($eA$ is a cyclic right $A$-module), as is the faithfulness, given that we have defined $\varphi$ to be an isomorphism. \Cref{le.not-art} below, on the other hand, argues that the resulting full gauge action on $B=\cO_E$ is not free
\end{example}

\begin{lemma}\label{le.not-art}
  For the $C^*$ correspondence $(A,E,\varphi)$ from \Cref{ex.not-art} the element $1\in B=\cO_E$ is not in the closure of $B^*_1 B_1 = B_{-1}B^*_{-1}$. 
\end{lemma}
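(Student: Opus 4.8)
<br>

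The plan is to understand $B_{-1}B_{-1}^*$ concretely by unwinding what the correspondence $(A,E,\varphi)$ looks like, and then exhibit a character (or more precisely a state/representation) of $\mathcal{O}_E$ under which the image of $\overline{B_{-1}B_{-1}^*}$ is a proper ideal not containing the identity. First I would identify the degree $-1$ part: elements of $B_{-1}$ are norm-limits of sums $e_1\cdots e_n f_1^*\cdots f_{n+1}^*$ with $e_i,f_j\in E=\chi A$, so $B_{-1}B_{-1}^*$ is spanned by products of the shape $e_1\cdots e_n f_1^*\cdots f_{n+1}^* g_{n+1}\cdots g_1 h_n^*\cdots h_1^*$. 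Using the Cuntz--Pimsner relations — in particular $f^*g=\braket{f|g}\in A$ and the covariance identity $a=\psi_1(\varphi(a))$ for $a\in J_E$, which here holds for all of $A$ since $\varphi$ is an isomorphism onto $K(E)=B(E)$ — these words collapse. The key computation is that $f^*e=\braket{f|e}$ lands in $\chi A\chi=\chi A$ (because $E=\chi A$ and the inner product is $\braket{f|e}=f^*e$ with the ordinary product in $A$), so in fact $B_{-1}^*B_{-1}=\overline{B_1^*B_1}\subseteq \overline{EE^*}\cdot(\text{stuff})$; more carefully, I want to show $\overline{B_{-1}^*B_{-1}}$ is contained in the hereditary subalgebra of $A$ generated by $\chi$, hence inside $\chi A$.

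The cleanest way to pin this down: show by induction on $n$ that for $e_i\in E$ one has $f_1^*\cdots f_n^*\,e_1\cdots e_n\in \varphi^n(A)\cdot(\text{compacts})$, and that after applying the covariance relation everything of degree $-1$ times its adjoint sits inside $\overline{\mathrm{span}}\{\,f^*g : f,g\in E\,\}=\overline{\braket{E|E}}$. Since $E=\chi A$, we have $\braket{f|g}=f^*g\in \chi A\chi=\chi A$, so $\overline{B_{-1}B_{-1}^*}=\overline{B_1^*B_1}\subseteq \overline{\braket{E|E}}\subseteq \chi A$. But $\chi\in A$ is the characteristic function of $\mathbb{N}_{\ge1}$, so $\chi A$ is the ideal of functions vanishing at $0\in\mathbb{N}$; in particular $1\in A=\ell^\infty(\mathbb{N})$, whose value at $0$ is $1$, cannot lie in the closed ideal $\chi A$. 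Evaluating at the coordinate $0$ — equivalently, composing the inclusion $A\hookrightarrow B$ with the quotient $B\to B/\overline{B\chi B}$ or with the character of $A$ ``value at $0$'' extended suitably — shows $1\notin\overline{B_{-1}B_{-1}^*}$.

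The main obstacle is the first step: verifying rigorously that $\overline{B_{-1}B_{-1}^*}$ (and likewise $\overline{B_1^*B_1}$) really is contained in $\chi A$ inside $\mathcal{O}_E$, rather than in some larger subalgebra. This requires being careful about the gauge-degree bookkeeping and about using the covariance relation $a=\psi_1(\varphi(a))$ to reduce the length of words; one must check that iterating $f^*g=\braket{f|g}$ and sliding scalars past creation operators via $\varphi$ never produces a term whose ``value at $0$'' is nonzero. Concretely, every creation operator $e=\chi a$ satisfies $e=\chi e$ and $e^*=e^*\chi$, and $\varphi(\chi)=\chi$ is the unit of $K(E)=B(E)$, so one expects that every monomial in $B_{-1}B_{-1}^*$ can be written with a leading factor of $\chi$ (on the left, or symmetrically sandwiching), forcing membership in $\chi\mathcal{O}_E\chi\cap A=\chi A$. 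Once that containment is established the conclusion is immediate from $\chi(0)=0\ne 1$.
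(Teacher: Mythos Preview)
Your target is right and the route is genuinely different from the paper's, but one of the two justifications you sketch is wrong and the other is left incomplete.

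\textbf{What is correct.} In this example $J_E=A$ and $\varphi:A\to K(E)$ is an isomorphism, so the covariance relation gives $\overline{EE^*}=\psi_1(K(E))=A$ inside $\cO_E$. This is the crucial point, and it lets you collapse any degree-$(-1)$ monomial $e_1\cdots e_n f_1^*\cdots f_{n+1}^*$ step by step: $e_n f_1^*\in A$, absorb into the neighbouring letters, repeat. After $n$ steps you are left with a single $g^*$ for some $g\in E$. Hence $B_{-1}=E^*$, and therefore
\[
\overline{B_{-1}B_{-1}^*}=\overline{E^*E}=\overline{\braket{E|E}}=I_1=\chi A,
\]
which does not contain $1$ because $1(0)=1\neq 0$. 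That is a complete proof, shorter than the paper's; you allude to the collapse mechanism but never actually carry it out. (Also watch the bookkeeping: you write $B_{-1}^*B_{-1}$ in a couple of places where you mean $B_{-1}B_{-1}^*$.)

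\textbf{What is wrong.} The alternative you propose at the end does not work. In $\cO_E$ the product $\chi\cdot e$ is the \emph{left} action $\varphi(\chi)e=\chi_2 e$, not the product in $A$; so ``$e=\chi e$'' is false whenever $e(1)\neq 0$, and your ``leading factor of $\chi$'' heuristic collapses. What \emph{is} true is $e=e\chi$ and $e^*=\chi e^*$, but that only helps for monomials beginning with an $e^*$. Likewise the suggestion to pass to $B/\overline{B\chi B}$ fails: since $\overline{EE^*}=A$ and $E=\chi A\subseteq B\chi B$, the ideal $\overline{B\chi B}$ is all of $B$, so that quotient is zero and no ``evaluation at $0$'' extends to $\cO_E$.

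\textbf{Comparison with the paper.} The paper does not collapse to level $0$; instead it equalises the word-length to a fixed $n$ (using $1\approx\sum ef^*$ to raise $n$), and then observes that the resulting elements, viewed in $K(E^{\otimes n})\cong\chi_n A$, lie in the proper ideal $\chi_{n+1}A$, forcing distance $\ge 1$ from the unit. Pulling this back through the $n$-fold shift isomorphism $\varphi^{(n)}:A\to\chi_n A$ is exactly your statement that the elements lie in $\chi A\subsetneq A$; so the two arguments are the same computation read at different levels. Your version is cleaner once $B_{-1}=E^*$ is established, while the paper's avoids invoking that reduction and instead works module-theoretically inside $E^{\otimes n}$.
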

\begin{proof}
$B_{-1}=B^*_1$ is the closure of the span of elements of the form 
\begin{equation}\label{eq:efs}
  e_1\cdots e_n f^*_1\cdots f^*_{n+1}=0,\ \forall e_i,f_j\in E,\ n\ge 0. 
\end{equation}
Moreover, faithfulness together with the FG property ensure that $1$ is arbitrarily approximable by sums of the form $\sum_{e,f\in E}ef^*$, so we can work with elements (\ref{eq:efs}) for a fixed but arbitrary $n$. In conclusion, it suffices to show that the distance from $1$ to any linear combination of elements of the form
\begin{equation}\label{eq:efpq}
  e_1\cdots e_n f^*_1\cdots f^*_{n+1} p_1\cdots p_{n+1} q^*_{1}\cdots q^*_{n}
\end{equation}
is $\ge 1$.

Denote the characteristic function of $\mathbb{N}_{\ge j}\subset \mathbb{N}$ by $\chi_j$, so that $\chi_0=1$, $\chi_1$ is the element $\chi$ from \Cref{ex.not-art}, etc. Note that the elements
\begin{equation*}
  f^*_1\cdots f^*_{n+1} p_1\cdots p_{n+1}\in A\subset B
\end{equation*}
belong to the ideal $\chi_{n+1}A$.

On the other hand, it is easy to see that $E^{\otimes n}$ is $\chi_n A$ as a Hilbert module, with the left hand action given by the isomorphism $A\to B(\chi_n E) = \chi_n A\chi_n$ induced by $n$-fold shift (this is analogous to the case $n=1$ described in \Cref{ex.not-art}).

Regarding products $e_1\cdots e_n$ as elements of $E^{\otimes n}$, the above remarks show that (\ref{eq:efpq}) is an element of $B(E^{\otimes n})\subset B$ taking values in the proper subspace
\begin{equation*}
  \chi_{n+1}E = (\chi_n E)\chi_{n+1}\subset \chi_n E. 
\end{equation*}
This shows that indeed a linear combination of such elements is $\ge 1$ away from the unit $1\in B(E^{\otimes n})\subset B$.
\end{proof}

\Cref{ex.not-art} indicates what can go wrong when attempting to reverse the implication in \Cref{pr.cp-nec}: the issue there is that for $n\ge 0$ we have
\begin{equation*}
  e^*_1\cdots e^*_n f_1\cdots f_n \in \chi_n A\subset A\subset \cO_E,
\end{equation*}
forming a strictly descending chain of ideals. As soon as we rule this out, however, freeness of the gauge action is restored. To phrase the result we need to introduce the following notion.

\begin{definition}\label{def.artin}
  Let $(A,E,\varphi)$ be a $C^*$ correspondence, and for non-negative integers $n\ge 0$ denote by $I_n$ the ideal of $A$ defined recursively as follows:
  \begin{itemize}
  \item $I_0=A$;
  \item $I_1$ is the closed span of $\braket{e|f}$ for $e,f\in E$;
  \item $I_{n+1}$ is the closed span of $\braket{e|\varphi(x)f}$ for $e,f\in E$ and $x\in I_n$.     
  \end{itemize}
  Note that we have $I_0\supseteq I_1\supseteq\cdots$.
  
  The $C^*$ correspondence is {\it artinian} if the chain $(I_n)_{n\ge 0}$ eventually stabilizes. 
\end{definition}

\begin{remark}\label{re.artin-meaning}
  The artinian property is here named by analogy with the algebraic notion, where it means that an algebra has no strictly descending infinite chains of ideals.

  For graph algebras the property is analogous to condition (Y) of \cite[Definition 4.1]{chr}.   
\end{remark}

\begin{theorem}\label{th.cp-iff}
  Let $(A,E,\varphi)$ be a unital $C^*$-correspondence. The gauge $\mathbb{S}^1$-action on the corresponding Cuntz-Pimsner algebra $B=\cO_E$ is free if and only if the correspondence is
  \begin{itemize}
  \item faithful;
  \item FG;
  \item artinian.    
  \end{itemize}
\end{theorem}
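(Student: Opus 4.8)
The two propositions already established give us three of the four implications we need, modulo the artinian hypothesis: \Cref{pr.cp-nec} shows freeness forces faithfulness and FG, so the remaining task on the ``only if'' side is to show freeness also forces the artinian property, and on the ``if'' side to upgrade \Cref{pr.cp-suff} (which uses only faithful + full + FG) to a statement that uses faithful + FG + artinian. The organizing observation is that for a faithful FG correspondence the ideals $I_n$ of \Cref{def.artin} admit a concrete interpretation inside $B=\cO_E$: unwinding the recursion, $I_n$ is precisely the closed span of the elements $f_1^*\cdots f_n^* e_1\cdots e_n = \braket{f_1\otimes\cdots\otimes f_n \mid e_1\otimes\cdots\otimes e_n}$ lying in $A\subset B$, i.e. $I_n = \overline{B_{-n}B_n}\cap A$ (and in fact equals $\overline{B_n^* B_n}$), because $\braket{e \mid \varphi(x) f}$ with $x\in I_{n-1}$ is exactly what one gets by sandwiching a degree-$(n-1)$ ``diagonal'' element between one annihilation and one creation operator. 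I would state and prove this identification as a preliminary lemma; it is essentially a bookkeeping computation using the relations in $\cT_E$ together with the fact (from \Cref{re.pfg}) that $K(E^{\otimes n}) = B(E^{\otimes n})$ when $E$ is FG, so that tensor products of creation operators compose to give all of $K(E^{\otimes n})$.

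\textbf{The ``only if'' direction.} Assume the gauge action is free; by \Cref{pr.cp-nec} the correspondence is faithful and FG, so the identification $I_n = \overline{B_n^* B_n}$ is available. I want to show the chain $(I_n)$ stabilizes. The key input is the isomorphism \Cref{eq:kat-cong} from \cite[Proposition 5.16]{kat-alg}, or rather its shifted analogue: $\mathcal{B}_{[m,\infty]}/\mathcal{B}_{[m+1,\infty]} \cong$ (a suitable quotient of) $A$, and more usefully, in the FG case $\mathcal{B}_{[0,\infty]} = \cO_E$ and $\mathcal{B}_{[m,\infty]}$ is the closed ideal (inside the fixed-point algebra $B_0$) generated by the degree-zero parts $\overline{B_m B_m^*}$ of the images of $K(E^{\otimes n})$, $n\ge m$. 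Since $\overline{B_m B_m^*} \supseteq \overline{B_{m+1}B_{m+1}^*}$ (because $B_{m+1}\subseteq \overline{B_1 B_m}$, hence $B_{m+1}B_{m+1}^* \subseteq \overline{B_1 B_m B_m^* B_1^*}$, and $\overline{B_1 A B_1^*}\subseteq \overline{B_1 B_1^*}$... one must be slightly careful here, but the containment $\mathcal{B}_{[m+1,\infty]}\subseteq \mathcal{B}_{[m,\infty]}$ is immediate from the definition in \cite{kat-alg}), we get a descending chain of ideals $\mathcal{B}_{[0,\infty]}\supseteq \mathcal{B}_{[1,\infty]}\supseteq \cdots$ whose intersection contains $1$ iff each term contains $1$. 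Now freeness gives $1\in \overline{B_1 B_1^*}$, and iterating — here one uses that $1\in \overline{B_1 B_1^*}$ implies $1 = \lim \sum_i x_i y_i^*$ with $x_i,y_i\in B_1$, and then applying the $d=-1$ half of freeness inside each $B_1$... more cleanly: freeness of the restricted picture shows $1\in \overline{B_n B_n^*}$ for every $n$, equivalently (taking adjoints and using the $C^*$ identity) $1\in \overline{B_n^* B_n} = I_n$ for every $n$. But if $I_n = A$ for all $n$, the chain is constant, hence trivially stabilizes. Wait — that shows the chain is the constant chain $A\supseteq A\supseteq\cdots$, which does stabilize, so we are done. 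The genuine content is thus the claim ``freeness $\Rightarrow I_n = A$ for all $n$'', which follows from: $1\in\overline{B_1 B_1^*}$, each generator $e f^*\in B_1 B_1^*$ with $e,f\in E$ can be pushed one level up because $e = \lim\sum e' (f')^* e''$-type expansions coming from $1\in\overline{B_1B_1^*}$ again applied on the left, giving $1\in \overline{B_2 B_2^*}$, and inductively $1\in\overline{B_n B_n^*}$; dualizing, $I_n\ni 1$, so $I_n = A$.

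\textbf{The ``if'' direction.} Assume faithful, FG, and artinian, say $I_N = I_{N+1} = \cdots =: I$. By the identification, $I = \overline{B_N^* B_N}$ and more importantly $I$ is a (closed, two-sided, self-adjoint) $\varphi$-invariant ideal in the sense that $\overline{\braket{E \mid \varphi(I)E}} = I$; restricting the correspondence to $I$ — i.e. forming the sub-correspondence $E_I := \overline{\varphi(I)E}$ over the $C^*$-algebra $I$ — one checks that $(I, E_I, \varphi|_I)$ is faithful (faithfulness of $\varphi$ passes to the restriction since $I$ is an ideal), FG (a quotient/corner of a FG module over a unital ideal is still FG — here $I$ need not be unital, which is a wrinkle, but $\overline{B_N^*B_N}$ being all of $I$ handles this), and now crucially \emph{full}: fullness is exactly the stabilization $I_{N} = I_{N+1}$ read at the level of the restricted correspondence. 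Then \Cref{pr.cp-suff} applies to $(I,E_I,\varphi|_I)$ and yields that its gauge action is free, i.e. $1_{M(I)} \in \overline{(B_I)_1 (B_I)_1^*}$ inside the multiplier algebra; translating back through the inclusion $\cO_{E_I}\hookrightarrow \cO_E$ (which carries the relevant spectral subspaces into $B_N B_N^*$-corners of $B$) gives $1\in \overline{B_N^* B_N}\cdot(\text{something})$, and unwinding — again using $I_N = A$ is \emph{not} assumed, so one needs the telescoping $A = I_0 \supseteq I_1\supseteq\cdots\supseteq I_N = I$ together with freeness \emph{within} $I$ — one propagates freeness back up: $1\in\overline{B_1 B_1^*}$ and $1\in\overline{B_1^*B_1}$, which is precisely freeness of the full gauge action on $B=\cO_E$ by the $d=\pm1$ criterion used in \Cref{pr.cp-suff,pr.cp-nec}.

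\textbf{Main obstacle.} The delicate point throughout is the interface between the ideal chain $(I_n)$ of \Cref{def.artin}, which lives in $A$, and the spectral-subspace products $\overline{B_n^* B_n}$, $\overline{B_n B_n^*}$, which live in $\cO_E$; establishing the identification $I_n = \overline{B_n^* B_n}$ cleanly, and then correctly invoking the Katsura filtration \cite[Proposition 5.16]{kat-alg} in its shifted form $\mathcal{B}_{[m,\infty]}/\mathcal{B}_{[m+1,\infty]}\cong A/\varphi^{-1}(K(E^{\otimes m+1}))$-type statement, is where the real work lies. A secondary subtlety is that the stabilized ideal $I$ need not be unital as a $C^*$-algebra, so applying \Cref{pr.cp-suff} to the restricted correspondence $(I, E_I, \varphi|_I)$ requires either promoting to multiplier algebras or observing that the stabilization $I_N = I_{N+1}$ together with FG-ness forces $\varphi(I)$ to contain a unit for the relevant corner. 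I would isolate these as two lemmas preceding the theorem proper, so that the proof of \Cref{th.cp-iff} itself reduces to the short bookkeeping above.
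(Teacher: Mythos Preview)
Your proposal contains a genuine error in the ``only if'' direction. You claim that freeness forces $I_n = A$ for all $n$, so that the chain is constant. This is false. Consider the graph with vertices $v_1,v_2$ and edges $v_1\to v_2$, $v_2\to v_2$: it is row-finite with no sinks, so by \Cref{cor.chr-th4.2} the gauge action is free, yet $v_1$ receives no edges, so $I_1 = \bC p_{v_2}\subsetneq A$ (cf.\ \Cref{re.art-gr}). The chain stabilizes at $I_1$, not at $A$. The source of the error is your identification $I_n = \overline{B_n^* B_n}$: the right-hand side is a closed ideal of the fixed-point algebra $B_0$, not of $A$, and when the action is free it equals all of $B_0$, which is strictly larger than $A$ in general. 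What \emph{is} true is that $I_n$ is the closed span inside $A$ of the elements $f_1^*\cdots f_n^* e_1\cdots e_n$ with $e_i,f_j\in E$; but $B_n$ contains many more monomials than those of $E^{\otimes n}$, and products $x^*y$ with $x,y\in B_n$ need not land in $A$.

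The paper's argument for ``only if'' is different in kind: rather than showing $I_n=A$, it shows $I_{n+1}=I_n$ for \emph{some} $n$. Freeness gives an element $t$ close to $1$ that is a finite sum of terms $e_1\cdots e_n f_1^*\cdots f_{n+1}^* p_1\cdots p_{n+1} q_1^*\cdots q_n^*$; viewed as an operator on $E^{\otimes n}$, each such term has range in $E^{\otimes n}I_{n+1}$, so $t\approx 1$ forces $\overline{E^{\otimes n}I_{n+1}} = E^{\otimes n}$, and pairing with $E^{\otimes n}$ on the left yields $I_n\subseteq I_{n+1}$.

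Your ``if'' direction is also problematic. The plan to restrict to the stabilized ideal $I=I_N$, apply \Cref{pr.cp-suff} there, and then ``propagate freeness back up'' has several unaddressed gaps: $I$ need not be unital (so \Cref{pr.cp-suff} does not literally apply); the existence and properties of an embedding $\cO_{E_I}\hookrightarrow \cO_E$ are not established; and even granting both, you have not explained how freeness inside a corner yields $1_B\in\overline{B_{-1}B_{-1}^*}$ when $1_B$ does not lie in that corner. The paper avoids all of this by arguing directly: once $I_{n+1}=I_n$, the ``middle segments'' $f_1^*\cdots f_{n+1}^* p_1\cdots p_{n+1}$ (spanning $I_{n+1}$) approximate the shorter $f_1^*\cdots f_n^* p_1\cdots p_n$ (spanning $I_n$), so elements of $B_{-1}B_{-1}^*$ approximate elements of $B_0 B_0^*\ni 1$.
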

\begin{proof}
  {\bf ($\Leftarrow$)} Suppose the correspondence has the three properties in the statement. Faithfulness and the FG property imply that in $B$ we have
  \begin{equation*}
    1\in \overline{B_1 B^*_1},
  \end{equation*}
  as in the proof of \Cref{pr.cp-suff}. We have to further prove that $1\in \overline{B_{-1}B^*_{-1}}$. This entails showing, as described in the proof of \Cref{le.not-art}, that the unit $1\in B$ is in the closed span of elements of the form \Cref{eq:efpq} for some $n\ge 0$.

  In $B$, the middle segments $$f^*_1\cdots f^*_{n+1} p_1\cdots p_{n+1}$$ of \Cref{eq:efpq} respectively span (a dense subset of) the ideal $I_{n+1}$, so for sufficiently large $n$ linear combinations of such elements approximate elements of the form
  \begin{equation*}
    f^*_1\cdots f^*_{n} p_1\cdots p_{n}
  \end{equation*}
  arbitrarily well. This means that for large $n$ 
  \begin{equation*}
    e_1\cdots e_n f^*_1\cdots f^*_{n} p_1\cdots p_{n} q^*_{1}\cdots q^*_{n}
  \end{equation*}
  for $e_i\in E$, etc. is approximable by linear combinations of elements of the form \Cref{eq:efpq}. In other words, we have
  \begin{equation*}
    1\in \overline{B_0 B^*_0}\subseteq \overline{B_{-1} B^*_{-1}},
  \end{equation*}
as desired. 
  
{\bf ($\Rightarrow$)} We already know from \Cref{pr.cp-nec} that freeness entails faithfulness and the FG property, so it remains to show that it also implies artinianness. Freeness implies that $1\in \overline{B_{-1}B^*_{-1}}$, and hence for some $n$ a linear combination $t\in B$ of elements of the form \Cref{eq:efpq} is within some small $\varepsilon>0$ of the unit $1\in B$.

Such a $t$ is an operator on
\begin{equation*}
  E^{\otimes n} = \overline{\mathrm{span}}\{e_1\cdots e_n\}\subset B
\end{equation*}
taking values in $E^{\otimes n} I_{n+1}$. The previous paragraph then implies that
\begin{equation*}
  \mathrm{dist}(x,\overline{E^{\otimes n}I_{n+1}})\le \|tx-x\| < \varepsilon\|x\|
\end{equation*}
for all $x\in E^{\otimes n}$, meaning (e.g. by Hahn-Banach) that the Hilbert submodule
\begin{equation}\label{eq:hm-incl}
  \overline{E^{\otimes n}I_{n+1}}\subseteq E^{\otimes n}
\end{equation}
is equal to $E^{\otimes n}$. 

To conclude, note that on the one hand left-multiplying the left hand side of (what we now know to be the equality) \Cref{eq:hm-incl} by
\begin{equation*}
  e_1^*\cdots e_n^*,\ e_i\in E
\end{equation*}
produces elements in $I_{n+1}$, whereas applying the same maps to the right hand side produces elements spanning a dense subspace of $I_n$. The claim that $I_{n+1}=I_n$ follows.
\end{proof}

\subsection{Finite actions}\label{subse.fin}

Next, we consider the restricted gauge actions of $\mathbb{Z}/k$, $k\ge 2$ on Cuntz-Pimsner algebras $\cO_E$, for the same purpose of determining precisely when they are free. We retain the shorthand notation $B=\cO_E$, and recall from \Cref{subse.cp} the definition of the ideal $J_E\subseteq A$: 
\begin{equation*}
  J_E = \varphi^{-1}(K(E))\cap \ker(\varphi)^\perp,
\end{equation*}
where the {\it orthogonal complement} $I^\perp$ of an ideal $I\subseteq A$ is by definition
\begin{equation*}
  I^\perp=\{a\in A\ |\ ax=0,\ \forall x\in I\}. 
\end{equation*}

Recall also the ideals $I_n$, $n\ge 0$ of $A$ introduced in \Cref{def.artin}. The main result of the present subsection is

\begin{theorem}\label{th.fin}
  Let $k\ge 2$ be a positive integer and $(A,E,\varphi)$ a unital $C^*$ correspondence. The gauge $\mathbb{Z}/k$-action on $B=\cO_E$ is free if and only if
  \begin{equation}\label{eq:k-1}
    \overline{J_E+I_{k-1}} = A. 
  \end{equation}
\end{theorem}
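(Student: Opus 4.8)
The starting point is the same bookkeeping as in the $\mathbb{S}^1$ case: for the $\mathbb{Z}/k$-action the fixed-point algebra $B^{\mathbb{Z}/k}$ is the closure of the span of words $e_1\cdots e_n f^*_1\cdots f^*_m$ with $m\equiv n\pmod k$ (together with $A$), and the $\tau$-eigenspaces are indexed by $\mathbb{Z}/k$. Saturation (=freeness, via \Cref{def.sat}) requires $1\in\overline{B_\tau^* B_\tau}$ for every character $\tau$, and since $\mathbb{Z}/k$ is generated by a single character $\tau_0$ it suffices to control $B_{\tau_0}$ and $B_{-\tau_0}=B_{\tau_0}^*$. So I would first reduce, exactly as in the proofs of \Cref{pr.cp-suff} and \Cref{th.cp-iff}, to two demands: $1\in\overline{B_{\tau_0}B_{\tau_0}^*}$ and $1\in\overline{B_{\tau_0}^*B_{\tau_0}}$. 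The first is spanned by words of net degree in $\{1,1-k,1+k,\dots\}$ on the left times their adjoints; the cheapest representatives have degree $1$, so (assuming faithfulness and FG, which I will need to extract from the hypothesis --- see below) this reduces to $1\in\overline{B_1 B_1^*}$ just as before and is automatic. The content is therefore entirely in the second condition, $1\in\overline{B_{\tau_0}^*B_{\tau_0}}$, and the cheapest representatives of $B_{\tau_0}^*$ are words $e_1\cdots e_n f^*_1\cdots f^*_{n+k-1}$ with $n\ge 0$ (degree $-(k-1)$) — but also, crucially, the degree-$0$ option is unavailable in a single eigenspace, whereas the degree $k-1-k=-1$ correction via $B_1 B_1^*$ is.

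Concretely, I expect the clean statement to be: $1\in\overline{B_{\tau_0}^*B_{\tau_0}}$ iff $1$ lies in the closed span of the operators
\begin{equation*}
 e_1\cdots e_n\, f^*_1\cdots f^*_{n+k-1}\, p_1\cdots p_{n+k-1}\, q^*_1\cdots q^*_n,\qquad n\ge 0,\ e_i,f_i,p_i,q_i\in E,
\end{equation*}
which, by the analysis in the proof of \Cref{le.not-art}/\Cref{th.cp-iff}, is an operator on $E^{\otimes n}$ with values in $\overline{E^{\otimes n}I_{k-1}}$ — here the middle block $f^*_1\cdots f^*_{n+k-1}p_1\cdots p_{n+k-1}$, after absorbing $n$ of the stars and creations into the $E^{\otimes n}$ module structure, contributes a factor from $I_{k-1}$ (iterating the inner-product recursion of \Cref{def.artin} exactly $k-1$ times). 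Running the Hahn-Banach argument from the $(\Rightarrow)$ direction of \Cref{th.cp-iff} backwards and forwards, $1\in\overline{B_{\tau_0}^*B_{\tau_0}}$ becomes equivalent to the Hilbert-module equality $\overline{E^{\otimes n}I_{k-1}}=E^{\otimes n}$ for some (equivalently all large) $n$; and pulling back by $e_1^*\cdots e_n^*$ and pushing the right side forward identifies this with $\overline{I_{k-1}+(\text{something})}=A$ — I expect that "something" is exactly $J_E$, arising as the obstruction to $1\in A$ being compact on $E^{\otimes n}$ after the Cuntz covariance relations are imposed (this is where the Katsura ideal $J_E$, which was absent in the $\mathbb{S}^1$ story because there the relevant quotient was $A/\varphi^{-1}(K(E))$, re-enters through the finer structure of $\cO_E$ rather than $\cT_E$).

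The one genuine subtlety, and the step I expect to be the main obstacle, is the appearance of $J_E$ rather than $\varphi^{-1}(K(E))$ in \Cref{eq:k-1}: in the $\mathbb{S}^1$ proof faithfulness was a separate necessary hypothesis and $\ker\varphi$ was trivial, so $J_E=\varphi^{-1}(K(E))$; here faithfulness is \emph{not} assumed, and I must track carefully how $\ker(\varphi)^\perp$ enters. I would handle this by invoking the structure of the core subalgebras $\mathcal{B}_{[m,\infty]}$ from \cite[\S5]{kat-alg} — specifically the exact sequences of \cite[Proposition 5.16]{kat-alg}, which involve $J_E$ precisely — to compute when $1\in\cO_E$ belongs to the ideal generated by $B_{\tau_0}^*B_{\tau_0}$-type words. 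The implication $(\Leftarrow)$ should then be the more computational half (given \Cref{eq:k-1}, build explicit approximants to $1$ of the form above, splitting $1=a+b$ with $a\in J_E$ handled via the Cuntz relation $a=\psi_1(\varphi(a))$ which supplies extra creation/annihilation operators, and $b\in I_{k-1}$ handled as in \Cref{th.cp-iff}), while $(\Rightarrow)$ is the Hahn-Banach/module-equality argument adapted from \Cref{th.cp-iff} together with the identification of the residual ideal via \cite[Proposition 5.16]{kat-alg}. Note that faithfulness and FG, which were separate bullets in \Cref{th.cp-iff}, are \emph{not} listed in \Cref{th.fin}: I expect faithfulness to be genuinely unnecessary for the finite action (the $\ker\varphi$ part is already quotiented away inside $\cO_E$ and is invisible to $\mathbb{Z}/k$-saturation since it sits in degree $0$), and FG to be a \emph{consequence} of \Cref{eq:k-1} — indeed $I_{k-1}\subseteq\varphi^{-1}(K(E))$ is forced by the usual compactness argument, so $\overline{J_E+I_{k-1}}=A$ already pushes $1$ into a region where $K(E^{\otimes n})$ is unital — a point I would verify explicitly early on as a sanity check.
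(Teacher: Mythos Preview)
Your $(\Rightarrow)$ plan is right in spirit: the paper also projects an approximation of $1$ onto $\mathbb{S}^1$-degree zero and then invokes the isomorphism $\mathcal{B}_{[0,\infty]}/\mathcal{B}_{[1,\infty]}\cong A/J_E$ from \cite[Proposition 5.16]{kat-alg}. Your Hahn--Banach/module-equality detour (modeled on \Cref{th.cp-iff}) is more elaborate than necessary but not wrong.

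The $(\Leftarrow)$ direction has a genuine gap. You split freeness into two conditions and plan to handle $1\in\overline{B_{\tau_0}B_{\tau_0}^*}$ by first extracting faithfulness and FG from \Cref{eq:k-1}. But FG is \emph{not} a consequence of \Cref{eq:k-1}, and your supporting claim $I_{k-1}\subseteq\varphi^{-1}(K(E))$ is false: for a graph with an infinite emitter $v$ receiving a length-$(k-1)$ path one has $p_v\in I_{k-1}$ while $p_v\notin\varphi^{-1}(K(E))$, yet \Cref{eq:k-1} holds (cf.\ \Cref{th.chr-pr4.3}). Faithfulness likewise fails whenever a sink receives such a path. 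So the route you propose for the ``first'' condition collapses, and with it the whole splitting strategy.

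The paper sidesteps all of this via an observation you are missing: since $-(k-1)\equiv 1\pmod k$, the word $e_1^*\cdots e_{k-1}^*$ \emph{already lies in} $B_1$ for the $\mathbb{Z}/k$-grading, so $e_1^*\cdots e_{k-1}^*\,f_1\cdots f_{k-1}\in B_1 B_1^*$, and such elements span $I_{k-1}$ densely. Meanwhile the Cuntz relation $a=\psi_1(\varphi(a))$ gives $J_E\subset\overline{\mathrm{span}}\{ef^*\}\subset \overline{B_1 B_1^*}$. Hence $A=\overline{J_E+I_{k-1}}\subseteq\overline{B_1 B_1^*}$ directly---no faithfulness, no FG, and no separate ``second'' condition is needed: for a finite cyclic group, $1\in\overline{B_1 B_1^*}$ at a single generator iterates to $1\in\overline{B_j B_j^*}$ for every $j\in\mathbb{Z}/k$, which is full saturation. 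In short, both $J_E$ and $I_{k-1}$ contribute to the \emph{same} product $B_1 B_1^*$; treating them as feeding two different conditions is what led you astray.
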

\begin{proof}
  We prove the two claimed implications separately.
  
  {\bf ($\Leftarrow$)} We have to argue that under the assumption that \Cref{eq:k-1} holds we have $1\in \overline{B_1 B^*_1}$ for a generator $1\in \mathbb{Z}/k$. Essentially by definition, we have
  \begin{equation*}
    I_{k-1} = \overline{\mathrm{span}}\{e^*_1\cdots e^*_{k-1} f_1\cdots f_{k-1}\ | e_i,f_j\in E\} \subset A\subset B.
  \end{equation*}
  On the other hand $K(E)\subset B$ is the closed span of $ef^*$ for $e,f\in E$, and by the definition of the Cuntz-Pimsner algebra $B$ the elements $x\in J_E\subset A\subset B$ are identified respectively with $\varphi(x)\in K(E)\subset B$. It follows that such elements are approximable sums
  \begin{equation*}
    \sum_{e,f\in E} ef^*.
  \end{equation*}
  All in all, the hypothesis ensures that we can find an invertible element of $B$ of the form
  \begin{equation*}
    \sum ef^* + \sum e^*_1\cdots e^*_{k-1} f_1\cdots f_{k-1}
  \end{equation*}
  for $e$s and $f$s in $E$. Since all terms belong to $B_1 B^*_1$, this concludes the proof of the present implication.

  {\bf ($\Rightarrow$)} This time we assume that $1\in \overline{B_1 B^*_1}$. The monomials (in $e$, $f^*$ for $e,f\in E$) belonging to $B_1$ are qualitatively of two types:
  \begin{itemize}
  \item $e_1\cdots e_t f^*_1\cdots f^*_s$ for $t-s\cong 1(\mathrm{mod}\ k)$;
  \item $e^*_1\cdots e^*_s$ for $s\cong -1(\mathrm{mod}\ k)$   
  \end{itemize}
  $1$ is approximable by a finite sum $\sigma$ of $xy^*$ for linear combinations $x$ and $y$ of terms of the above two types. Discarding those terms of $\sigma$ that lie outside the degree-$0$ component of $B$ with respect with the full gauge (i.e. $\mathbb{S}^1$-)action, we obtain an approximation of $1$ by
  \begin{equation*}
    \sum e_1 \cdots e_t f^*_1\cdots f^*_t + \sum g^*_1\cdots g^*_s h_1\cdots h_s 
  \end{equation*}
  for $e$s, $f$s, $g$s and $h$s in $E$, various $t\ge 1$ and $0<s\cong -1(\mathrm{mod}\ k)$ (so in particular $s\ge k-1$). In the notation of \cite[$\S$5]{kat-alg} and recalled in the proof of \Cref{pr.cp-nec} above, this means that
  \begin{equation*}
     \overline{\mathcal{B}_{[1,\infty]}+I_{k-1}} = \mathcal{B}_{[0,\infty]}. 
  \end{equation*}
  Once more using the result (of \cite[Proposition 5.16]{kat-alg}) that the canonical map
  \begin{equation*}
    A/J_E\to \mathcal{B}_{[0,\infty]}/\mathcal{B}_{[1,\infty]}
  \end{equation*}
  is an isomorphism, we conclude that the image of $J_E$ through $A\to A/J_E$ is full. This is the desired conclusion, finishing the proof.
\end{proof}

\begin{remark}
  Once more, this result shadows the analogous one for graph algebras (which it generalizes). Compare the condition in the statement, referencing the ideal $I_{k-1}$, with the necessary and sufficient condition for freeness in \cite[Proposition 4.3]{chr} for graph algebras: in the context of row-finite graphs it requires that all sinks receive length-$(k-1)$ paths. 
\end{remark}

\subsection{Prior work}\label{subse.prior}

Freeness for gauge actions has been considered before by several authors in the context of graph algebras. In this subsection we review how the above results recover those in the literature (at least for graphs with finitely many vertices) via the realization of graph $C^*$-algebras as Cuntz-Pimsner sketched in \Cref{subse.gr-cp}.

\cite[Proposition 2]{s-w03} provides a sufficient condition for freeness for the full gauge action: the graph $\Gamma$ should meet the requirements that
\begin{itemize}
\item every vertex emits only finitely many edges (we then say that the graph is {\it row-finite} because its adjacency matrix has rows with finitely many non-zero entries);
\item every vertex emits at least one edge (i.e. there are no {\it sinks});
\item every vertex receives at least one edge (i.e. there are no {\it sources}). 
\end{itemize}

The result is not optimal, in the sense that the implication is not reversible. A complete characterization of graphs with free gauge actions (both full and finite) is achieved in \cite[Theorem 4.2 and Proposition 4.3]{chr}. The results therein are phrased not for graph $C^*$-algebras but rather for the {\it Leavitt path algebras} $L(\Gamma)$ attached to the graph $\Gamma$.

By definition, $L(\Gamma)\subset C^*(\Gamma)$ is the dense $*$-subalgebra generated by all vertex projections $p_v$, $v\in \Gamma^0$ and all partial isometries $s_\gamma$, $\gamma\in \Gamma^1$. In that purely algebraic context, the counterpart of freeness is {\it strong gradedness}.

The gauge actions by $G=\bS^1$ or the various finite subgroups $G=\bZ/k$ induces a grading of $L(\Gamma)$ by the character group $\widehat{G}$. The latter group is isomorphic to $\bZ$ for the full action or $\bZ/k$ for the finite gauge actions, and the grading assigns the partial isometries $s_{\gamma}$ degree one. 

We then have
\begin{definition}\label{def.strng-grd}
  A grading of $L=L(\Gamma)$ by a group $\widehat{G}$ is {\it strong} if $L_\tau L_\eta=L_{\tau\eta}$. 
\end{definition}
The condition is easily seen to be stronger than that of saturation in \Cref{def.sat}. \Cref{pr.dual-pb} below says that the two are in fact equivalent, so that the two versions of the problem (purely algebraic via $L(\Gamma)$ vs. $C^*$-algebraic via $C^*(\Gamma)$) coincide.

We will need some preparation.

\begin{definition}\label{def.loc-unit}
  An associative algebra $A$ is {\it locally unital} if for every finite subset $F\subseteq A$ there is some $a\in A$ such that
  \begin{equation*}
    ax = xa = x,\ \forall x\in F. 
  \end{equation*}
When $A$ happens to be graded by some group $\widehat{G}$ we will usually assume further that the local units $a$ as above belong to $A_1$. 
\end{definition}

\begin{lemma}\label{le.loc-unit-prel}
Let $L$ be a locally unital $\widehat{G}$-graded algebra. The grading is strong if and only if $L_{\tau}L_{\tau^{-1}}=L_1$ for $\tau$ ranging over a set that generates $\widehat{G}$ as a monoid. 
\end{lemma}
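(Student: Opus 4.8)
The plan is to prove the two directions separately, with the forward direction being trivial and the reverse direction being the substantive one. For the forward implication, if the grading is strong then $L_\tau L_\eta = L_{\tau\eta}$ for \emph{all} $\tau,\eta$, so in particular $L_\tau L_{\tau^{-1}} = L_1$ for every $\tau$, hence certainly for $\tau$ in any generating set; this needs no hypothesis on local units.

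For the reverse direction, suppose $S\subseteq\widehat G$ generates $\widehat G$ as a monoid and $L_\sigma L_{\sigma^{-1}} = L_1$ for all $\sigma\in S$. First I would record the easy consequence that $L_1 L_1 = L_1$: the inclusion $\subseteq$ is automatic, and for $\supseteq$, given $x\in L_1$ take a local unit $a\in L_1$ with $ax = x$, so $x = ax\in L_1 L_1$. The key structural observation is a ``cancellation/multiplicativity'' lemma: for \emph{any} $\tau$, if $L_\tau L_{\tau^{-1}} = L_1$ then for every $\eta$ we have $L_{\tau}L_{\tau^{-1}\eta} = L_\eta$. Indeed, $\subseteq$ holds because degrees add; and for $\supseteq$, given $x\in L_\eta$, pick a local unit $a\in L_1$ with $ax = x$, write $a = \sum_i u_i v_i$ with $u_i\in L_\tau$, $v_i\in L_{\tau^{-1}}$ using the hypothesis, and then $x = ax = \sum_i u_i(v_i x)$ with $v_i x\in L_{\tau^{-1}\eta}$. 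Symmetrically $L_{\sigma^{-1}}L_{\sigma\eta} = L_\eta$ follows from $L_{\sigma^{-1}}L_\sigma = L_1$, which is part of the hypothesis since $\sigma\in S$ gives both $L_\sigma L_{\sigma^{-1}} = L_1$ and (taking the lemma just proved with $\eta = 1$ reversed, or simply noting $L_{\sigma^{-1}}L_\sigma\subseteq L_1 = L_{\sigma^{-1}}(L_\sigma L_{\sigma^{-1}})L_\sigma\subseteq L_{\sigma^{-1}}L_\sigma$) $L_{\sigma^{-1}}L_\sigma = L_1$.

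With this lemma in hand, I would show the set $T = \{\tau\in\widehat G \mid L_\tau L_\rho = L_{\tau\rho}\ \forall\rho\}$ is a submonoid of $\widehat G$ containing $S$, hence is all of $\widehat G$, which is exactly strong gradedness. It contains the identity by the $L_1 L_1 = L_1$ remark (more precisely, $L_1 L_\rho = L_\rho$ for all $\rho$, using a local unit for elements of $L_\rho$). It contains $S$: for $\sigma\in S$ and any $\rho$, apply the lemma with $\tau = \sigma$, $\eta = \sigma\rho$ to get $L_\sigma L_\rho = L_\sigma L_{\sigma^{-1}(\sigma\rho)} = L_{\sigma\rho}$. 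Finally it is closed under products: if $\tau_1,\tau_2\in T$ then $L_{\tau_1\tau_2}L_\rho = L_{\tau_1}(L_{\tau_2}L_\rho) = L_{\tau_1}L_{\tau_2\rho} = L_{\tau_1\tau_2\rho}$.

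The main obstacle — really the only place any care is needed — is the local-unit manipulation in the cancellation lemma: one must make sure that for a given element $x$ in some graded piece there is a local unit \emph{in $L_1$} acting as a two-sided identity on $x$, which is precisely the strengthened convention adopted in \Cref{def.loc-unit}, and that the resulting finite sum decomposition of that local unit inside $L_\tau L_{\tau^{-1}}$ has the claimed homogeneous factors. Everything else is bookkeeping with the grading, and since $\widehat G$ here is $\bZ$ or $\bZ/k$ — generated as a monoid by a single element together with its inverse (or by $\{1\}$ in the $\bZ/k$ case, where $-1 = k-1$ is a positive power) — the generating set can be taken very small, though the argument above does not need that.
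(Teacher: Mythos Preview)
Your argument is correct and proves the lemma, though the organization differs from the paper's. The paper first shows $L_\tau L_{\tau^{-1}}=L_1$ for \emph{every} $\tau\in\widehat G$ via the telescoping computation $L_{\tau_1}\cdots(L_{\tau_n}L_{\tau_n^{-1}})\cdots L_{\tau_1^{-1}}=L_1$, and then finishes in one line: $L_{\gamma\delta}=L_{\gamma\delta}L_1=L_{\gamma\delta}L_{\delta^{-1}}L_\delta\subseteq L_\gamma L_\delta$. You instead prove an absorption lemma ($L_\tau L_{\tau^{-1}}=L_1\Rightarrow L_\tau L_{\tau^{-1}\eta}=L_\eta$) and then run a submonoid argument on $T=\{\tau:L_\tau L_\rho=L_{\tau\rho}\ \forall\rho\}$. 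Both routes hinge on the same local-unit trick of writing $x=ax$ and factoring $a$ through the hypothesis; the paper's version is a bit shorter, while yours isolates a reusable intermediate statement.

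One small blemish: your parenthetical justification of $L_{\sigma^{-1}}L_\sigma=L_1$ is circular. After substituting $L_\sigma L_{\sigma^{-1}}=L_1$ and using $L_{\sigma^{-1}}L_1=L_{\sigma^{-1}}$, the displayed chain collapses to $L_{\sigma^{-1}}L_\sigma\subseteq L_1=L_{\sigma^{-1}}L_\sigma$, which assumes the conclusion. Fortunately you never use this symmetric version in the main argument, so the proof stands. If you want to keep the remark, note that $L_{\sigma^{-1}}L_\sigma=L_1$ follows \emph{a posteriori} from the strong gradedness you establish, or directly from the paper's telescoping applied to a word for $\sigma^{-1}$ in the generators.
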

\begin{proof}
One implication (strong grading $\Rightarrow$ the condition in the statement) is immediate, so we focus on the other one. Suppose then that $L_{\tau}L_{\tau^{-1}}=L_1$ for $\tau$ ranging over a set $F$ generating $\widehat{G}$ as a monoid.

{\bf Claim: $L_{\tau}L_{\tau^{-1}}=L_1$ for {\it all} $\tau$.} Indeed, every element $\tau$ is expressible as a product $\tau_1\cdots\tau_n$ for $\tau_i\in F$, and
\begin{equation*}
  L_{\tau}L_{\tau^{-1}}\supseteq \left(L_{\tau_1}\cdots \left(L_{\tau_n} L_{\tau_n^{-1}}\right)\cdots L_{\tau_1^{-1}}\right) = L_1
\end{equation*}
(the last inclusion follows inductively from the hypothesis that $L_{\tau_i}L_{\tau_i^{-1}}=L_1$). 

Now fix $\gamma$ and $\delta\in \Gamma$. By local unitality we have $L_{\eta}L_1 = L_{\eta}$ for all $\eta$. Furthermore, by the above claim we have 
\begin{equation*}
  L_{\gamma\delta} = L_{\gamma\delta}L_1 = L_{\gamma\delta} L_{\delta^{-1}}L_{\delta}\subseteq L_{\gamma} L_{\delta}.
\end{equation*}
This is precisely what strong gradedness means, finishing the proof.
\end{proof}

\begin{proposition}\label{pr.dual-pb}
  Let $\Gamma$ be an arbitrary graph and $G\subseteq \bS^1$ a closed subgroup. Then, the gauge action of $G$ on $\bS^1$ is free if and only if the induced grading on the Leavitt path algebra $L(\Gamma)$ is strong.  
\end{proposition}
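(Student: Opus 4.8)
The plan is to relate the two conditions — freeness of the $G$-gauge action on $C^*(\Gamma)$ and strongness of the induced $\widehat{G}$-grading on $L(\Gamma)$ — by passing through the common combinatorial reformulation afforded by \Cref{le.loc-unit-prel}. Since $\widehat{G}$ is either $\bZ$ or $\bZ/k$, it is generated as a monoid by the single element $\tau=1$ (the degree carried by the partial isometries $s_\gamma$); so by \Cref{le.loc-unit-prel}, once we know $L=L(\Gamma)$ is locally unital with local units in $L_1$ (which is clear: for a finite subset $F$ one takes the sum $p=\sum_v p_v$ over the finitely many vertices occurring in the supports of elements of $F$, and $p\in L_0$), strongness of the grading is equivalent to the single equation $L_1 L_{-1}=L_0$. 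On the $C^*$-side, freeness (equivalently saturation, \Cref{def.sat}) is the assertion that $\overline{C^*(\Gamma)_d^*\,C^*(\Gamma)_d}=C^*(\Gamma)_0$ for every $d\in\widehat{G}$, and by the same monoid-generation argument this reduces to the two density statements $1\in\overline{C^*(\Gamma)_1 C^*(\Gamma)_1^*}$ and $1\in\overline{C^*(\Gamma)_{-1}C^*(\Gamma)_{-1}^*}$ — but $C^*(\Gamma)$ need not be unital, so more precisely to $\overline{C^*(\Gamma)_1 C^*(\Gamma)_1^*}=C^*(\Gamma)_0$ and its mirror image. So the crux is to show: $L_1 L_{-1}=L_0$ (a purely algebraic identity inside $L(\Gamma)$) holds if and only if $\overline{C^*(\Gamma)_1 C^*(\Gamma)_1^*}=C^*(\Gamma)_0$.

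For the direction ($\Rightarrow$) suppose $L_1L_{-1}=L_0$. Then in particular each vertex projection $p_v\in L_0$ lies in $L_1 L_{-1}$, and since $L(\Gamma)\subset C^*(\Gamma)$ densely, the $p_v$ together with $L_0$ generate $C^*(\Gamma)_0$ as a (closed, two-sided… actually hereditary) subalgebra — more usefully, $C^*(\Gamma)_0$ is the closure of $\bigcup_F \overline{p_F C^*(\Gamma)_0 p_F}$ over finite vertex sets $F$, where $p_F=\sum_{v\in F}p_v$. Given $p_v=\sum_i x_i y_i^*$ with $x_i,y_i\in L_1$, and given any $a\in p_F C^*(\Gamma)_0 p_F$, writing $a=\sum_v p_v a = \sum_v(\sum_i x_i y_i^*)a$ and absorbing $y_i^* a$ into the degree-$(-1)$ part exhibits $a$ as a limit of elements of $C^*(\Gamma)_1 C^*(\Gamma)_1^*$; taking the union over $F$ gives $C^*(\Gamma)_0\subseteq\overline{C^*(\Gamma)_1 C^*(\Gamma)_1^*}$, and the reverse inclusion is trivial. (The mirror statement, $1\in\overline{C^*(\Gamma)_{-1}C^*(\Gamma)_{-1}^*}$, follows by the same bookkeeping, using that $s_\gamma^*s_\gamma=p_{r(\gamma)}$ realizes $p_v\in L_{-1}L_1$ whenever $v$ receives an edge, and that $L_1 L_{-1}=L_0$ together with local unitality forces, via \Cref{le.loc-unit-prel}, $L_{-1}L_1=L_0$ as well.)

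For the direction ($\Leftarrow$) suppose the $G$-action is free, so $\overline{C^*(\Gamma)_1 C^*(\Gamma)_1^*}=C^*(\Gamma)_0$. Fix a vertex $v$; then $p_v$ is approximated in norm by finite sums $\sum_i x_i y_i^*$ with $x_i,y_i\in C^*(\Gamma)_1$, and in turn each $x_i,y_i$ is approximated by elements of $L(\Gamma)_1$. The point is now to upgrade this norm-approximation to an exact algebraic identity. Here I would invoke the standard device for Leavitt path algebras: cut down by $p_v$ on the left, observe that $p_v L(\Gamma)_0 p_v$ — really, each graded piece relative to a finite vertex set — is (after quotienting appropriately) finite-dimensional or at least "locally matricial," so that a sufficiently good norm-approximation to $p_v$ inside $p_v C^*(\Gamma)_0 p_v$ already lies in $L_1 L_{-1}$. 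Concretely: because $s_\gamma^* s_\delta=\delta_{\gamma\delta}p_{r(\gamma)}$, products of the generators collapse drastically, and the degree-$0$ span of words of bounded length in a fixed finite set of edges is spanned by finitely many partial isometries; a projection that is within $<1$ of $p_v$ in such a finite-dimensional corner equals $p_v$. Summing the resulting identities $p_v\in L_1 L_{-1}$ and using that the $p_v$ (and the $s_\gamma$, which lie in $L_1=L_1 L_0\subseteq L_1(L_1 L_{-1})\subseteq L_1$, hmm) — more cleanly, using local unitality and \Cref{le.loc-unit-prel} once we know $L_1 L_{-1}=L_0$ — gives strongness.

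The step I expect to be the main obstacle is precisely this last upgrade from norm-density in $C^*(\Gamma)$ to an exact algebraic equality in $L(\Gamma)$ in the ($\Leftarrow$) direction: one must argue that the gauge-invariant parts of $C^*(\Gamma)$, restricted to a finite piece of the graph, are "rigid enough" (approximately finite-dimensional, with the Leavitt path algebra as a dense locally matricial subalgebra sharing the same lattice of projections) that an $\varepsilon$-approximation with $\varepsilon<1$ to an element already known to equal the desired one in $C^*(\Gamma)$ forces equality in $L(\Gamma)$. This is where the structure theory of $C^*(\Gamma)_0$ (it is an AF algebra, with $L(\Gamma)_0$ its canonical dense locally matricial $*$-subalgebra) does the real work; everything else is the formal reduction via \Cref{le.loc-unit-prel} and routine manipulation of the Cuntz–Krieger relations.
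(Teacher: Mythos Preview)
Your overall strategy matches the paper's: reduce via \Cref{le.loc-unit-prel} and local unitality to $L_dL_{-d}=L_0$ for $d=\pm 1$, then for the hard ($\Leftarrow$) direction approximate a projection and upgrade the norm approximation to an exact algebraic identity using finite-dimensionality. Two points need correcting.

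First, a slip in the reduction: $\bZ$ is \emph{not} generated as a monoid by $\tau=1$ alone, so for the full gauge action \Cref{le.loc-unit-prel} genuinely requires both $L_1L_{-1}=L_0$ and $L_{-1}L_1=L_0$. (The paper simply handles $d=1$ and remarks that $d=-1$ is analogous.)

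Second, and more substantively, your articulation of the upgrade step does not quite work as written. The approximant $\sum_i x_iy_i^*$ to $p_v$ is not a projection, so ``a projection within $<1$ of $p_v$ in a finite-dimensional corner equals $p_v$'' is inapplicable (and false even for projections, in general). The paper's version of this step is elementary and requires no appeal to the AF structure of $C^*(\Gamma)_0$: after cutting down on both sides by $x=s_\mu s_\mu^*$, form the finite-dimensional $C^*$-algebra $M$ generated by $x$ and the algebraic approximants $x_iy_j^*\in L_1L_{-1}$. The subalgebra $M'=\mathrm{alg}\{x_iy_j^*\}\subseteq M$ contains an element within $<1$ of the unit $x$ of $M$, hence an invertible element $y$; in a finite-dimensional algebra $y^{-1}$ is a polynomial in $y$, so $x=yy^{-1}\in M'\subseteq L_1L_{-1}$. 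That is the missing link you flagged.
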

\begin{proof}
  Set $B=C^*(\Gamma)$ and $L=L(\Gamma)$ (the Leavitt path algebra, as above). We prove the two claimed implications separately.

  {\bf ($\Leftarrow$)} Suppose the action on $L$ induces a strong grading. Then in particular
  \begin{equation*}
    L^*_{\tau}L_{\tau} = L_{\tau^{-1}}L_{\tau}=L_1,\ \forall \tau\in \widehat{G}, 
  \end{equation*}
and hence $L_1$ is contained in $B^*_{\tau}B_{\tau}$. Since $L_1\subseteq B_1$ is dense, the conclusion follows. 

  {\bf ($\Rightarrow$)} We have to argue that $L_{\gamma}L_{\delta}=L_{\gamma+\delta}$ for arbitrary degrees $\gamma$ and $\delta$ in $\mathbb{Z}/k$ (including, possibly, $k=\infty$), assuming that the analytic version of this containment holds (involving closed linear spans, etc.).

  Despite the fact that $L$ is not in general unital, it is {\it locally} unital as a $\widehat{G}$-graded algebra in the sense of \Cref{def.loc-unit}. For this reason, \Cref{le.loc-unit-prel} ensures that it is enough to prove $L_d L^*_d=L_0$ for $d=\pm 1$. We do this for $d=1$, as the other case poses no additional difficulties. Moreover, in order to fix ideas, assume throughout the rest of the proof that $k=\infty$ (i.e. we are considering the full gauge action). 

Fix a projection $x=s_{\mu}s^*_{\mu}$ in $L_0$. Since words $s_{\mu}s^*_{\nu}$ with $|\nu|=|\mu|$ are dominated by $x$ on the left, it is enough to show that $x\in L_1 L^*_1$.  

By the freeness of the action on $B$ we have 
\begin{equation}\label{eq:lin-comb}
  \left\|\sum_{i,j=1}^t c_{i,j} x_i y^*_j -x\right\|<1,
\end{equation}
where $x_i=s_{\mu_i}s^*_{\nu_i}$ and $y_j=s_{\alpha_j}s^*_{\beta_j}$ for $|\mu_i|=|\nu_i|+1$ and similarly for the $\alpha$s and $\beta$s.

Multiplying the difference from (\ref{eq:lin-comb}) on the left and right by the projection $x$, we may as well assume that all $x_i y^*_j$ are contained in $xLx$.

Now consider the finite-dimensional $C^*$-algebra generated by $x$ and all $x_i y_j^*$ (for which $x$ serves as a unit). The latter all belong to $B_1 B^*_1$ and hence
\begin{equation*}
  M':=\mathrm{alg}\{x_i y^*_j\} \subset B_1 B^*_1. 
\end{equation*}
Now, \Cref{eq:lin-comb} implies that the subalgebra $M'\subseteq M$ contains an invertible element, and hence $1\in M'\subset B_1 B^*_1$. This is what we sought to prove.
\end{proof}

Via \Cref{pr.dual-pb}, the strong gradedness results of \cite[Theorem 4.2]{chr} can be paraphrased as follows.

\begin{theorem}\label{th.chr-th4.2}
  For a graph $\Gamma$ the full gauge action on $C^*(\Gamma)$ is free iff $\Gamma$
  \begin{itemize}
  \item has no sinks;
  \item is row-finite; 
  \item every infinite path has a finite sub-path $\alpha$ such that some path $\beta$ of length $|\alpha|+1$ shares the target of $\alpha$.    
  \end{itemize}
  \qedhere
\end{theorem}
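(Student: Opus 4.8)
The plan is to deduce \Cref{th.chr-th4.2} from \Cref{th.cp-iff} together with \Cref{pr.dual-pb}, by translating the three abstract conditions (faithful, FG, artinian) into graph-theoretic language for the correspondence $E(\Gamma)$ described in \Cref{subse.gr-cp}. Since \Cref{pr.dual-pb} already identifies freeness of the full gauge action on $C^*(\Gamma)$ with strong gradedness of $L(\Gamma)$, and \Cref{th.cp-iff} characterizes freeness in terms of the correspondence, it remains to unwind each of the three bullets of \Cref{th.cp-iff}. Note that \Cref{th.cp-iff} is stated for \emph{unital} correspondences, i.e.\ $A = C_0(\Gamma^0) = C(\Gamma^0)$ must be unital, so we are implicitly in the case $\Gamma^0$ finite; I would either restrict to that case (as the introduction says is intended) or remark that the row-finiteness and sink conditions force, in the relevant direction, the analysis to be purely local at each vertex.

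First I would identify faithfulness of $\varphi$. From the formula $(\xi\triangleright f)(\gamma)=\xi(s(\gamma))f(\gamma)$, the kernel of $\varphi$ consists of functions supported on vertices $v$ with $s^{-1}(v)=\varnothing$, i.e.\ on sinks; so $\varphi$ is faithful iff $\Gamma$ has no sinks. Second, the FG condition: by \Cref{re.pfg}, $E$ is FG iff $K(E)=B(E)$, and for the graph correspondence $E=\bigoplus_{v}\ell^2(r^{-1}(v))$-type module, finite generation over $C(\Gamma^0)$ holds precisely when each fibre $r^{-1}(v)$ is finite — but one must instead track the \emph{source} fibres because of how $\varphi$ and the module structure interact; in any case this is the row-finiteness condition (every vertex emits finitely many edges). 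I would verify this by exhibiting $E(\Gamma)$ as $\bigoplus_{e\in\Gamma^1}\mathbb{C}\cdot e$ with $e\cdot\xi = \xi(r(e))e$ and checking that a finite generating set exists iff $s^{-1}(v)$ is finite for all $v$ (equivalently $\Gamma^1$ finite when $\Gamma^0$ is). Third, and this is the step I expect to be the main obstacle, I would match the artinian condition of \Cref{def.artin} with the third bullet. The ideals $I_n\subseteq C(\Gamma^0)$ are, by the recursion, the characteristic-function ideals of the sets $V_n := \{v\in\Gamma^0 \mid v$ receives a path of length $n\}$ (one checks $\braket{e\mid\varphi(x)f}$ is supported where a length-one edge lands on a vertex of $\mathrm{supp}(x)$, so $V_{n+1} = r(s^{-1}(V_n))$). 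Hence $I_n\supseteq I_{n+1}$ corresponds to $V_n\supseteq V_{n+1}$, and the chain stabilizes iff $\bigcap_n V_n$ is eventually reached — which, given no sinks and row-finiteness, happens iff there is no infinite path all of whose sufficiently-long terminal segments keep landing on strictly fewer vertices; a König-type argument converts "$(V_n)$ does not stabilize" into "there exists an infinite path $\mu$ such that for every finite sub-path $\alpha$ of $\mu$, no path $\beta$ with $|\beta|=|\alpha|+1$ ends at the target of $\alpha$," which is the negation of the third bullet.

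The delicate point in that last step is the direction of the implication that uses an infinite-path extraction: from non-stabilization of the decreasing sequence $V_0\supsetneq V_{n_1}\supsetneq\cdots$ of subsets of the \emph{finite} vertex set $\Gamma^0$, one gets a vertex $v$ lying in infinitely many $V_n$ but — by finiteness — actually in all $V_n$ from some point on is impossible unless the set stabilizes; so instead non-stabilization with $\Gamma^0$ finite forces some $V_n$ to drop strictly infinitely often, which is absurd. The correct reading (and what I would carefully argue) is that with $\Gamma^0$ finite the chain \emph{always} stabilizes, so artinianness is automatic, and the content of the third bullet is really needed only for infinite $\Gamma^0$ or is subsumed; more precisely, for finite $\Gamma^0$ the third condition is equivalent to "every infinite path eventually enters the stable set $V_\infty = \bigcap_n V_n$ and $V_\infty = r(s^{-1}(V_\infty))$ matches the relevant sub-path condition," and I would check this unwinding explicitly. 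I would therefore present the proof as: (i) faithful $\Leftrightarrow$ no sinks; (ii) FG $\Leftrightarrow$ row-finite; (iii) artinian $\Leftrightarrow$ the third bullet, via the identification $I_n \leftrightarrow V_n$ and a finitary König argument; then invoke \Cref{th.cp-iff} and \Cref{pr.dual-pb} to conclude. The main obstacle is getting the combinatorial translation of artinianness in step (iii) exactly right, including the boundary behaviour at sinks and the precise matching of "sub-path $\alpha$, path $\beta$ of length $|\alpha|+1$ sharing the target of $\alpha$" with the failure of $V_n = V_{n+1}$.
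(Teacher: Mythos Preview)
The paper does \emph{not} prove this theorem; it is quoted from \cite[Theorem~4.2]{chr} (via \Cref{pr.dual-pb}) and stated with a \verb|\qedhere| precisely because it is a citation. What the paper \emph{does} prove from its own \Cref{th.cp-iff} is only the finite-$\Gamma^0$ special case, \Cref{cor.chr-th4.2}. So the relevant comparison is between your argument restricted to finite $\Gamma^0$ and the paper's derivation of \Cref{cor.chr-th4.2}.

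On that restricted question your approach is the same as the paper's: faithful $\Leftrightarrow$ no sinks, FG $\Leftrightarrow$ row-finite, and then observe (as you eventually do in your ``delicate point'' paragraph) that for finite $\Gamma^0$ the ideal lattice of $C(\Gamma^0)$ is finite, so artinianness is automatic---this is exactly \Cref{le.art-gr}. The paper also remarks that the third bullet is automatic when $\Gamma^0$ is finite (every infinite path enters a cycle), so there is nothing to match: both the artinian condition and the third bullet are vacuous in that regime. Your attempt in step~(iii) to set up a general equivalence ``artinian $\Leftrightarrow$ third bullet'' via $I_n\leftrightarrow V_n$ and a K\"onig-type extraction is therefore unnecessary for what can actually be deduced here, and your own analysis shows it collapses in the finite case anyway.

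The genuine gap is scope: you cannot obtain the full statement of \Cref{th.chr-th4.2} (arbitrary $\Gamma$) from \Cref{th.cp-iff}, because the latter is proved only for unital correspondences. You acknowledge this, but then still outline a purported proof of the general equivalence in~(iii). That program cannot be completed along these lines: once $\Gamma^0$ is infinite, \Cref{th.cp-iff} is simply unavailable, and the combinatorial translation of artinianness you sketch is both beside the point and not quite right as stated (e.g.\ your K\"onig argument tries to extract an infinite path from non-stabilization of $(V_n)$, but non-stabilization of a decreasing chain of subsets of an infinite $\Gamma^0$ gives no control over any single infinite path without further hypotheses). The honest conclusion is the one the paper draws: from \Cref{th.cp-iff} one recovers \Cref{cor.chr-th4.2}, not \Cref{th.chr-th4.2}.
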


\begin{remark}\label{re.condy}
  The third item in the statement is a rephrasing of condition (Y) of \cite[Definition 4.1]{chr}. 
\end{remark}

In particular, for graphs with finitely many vertices the third condition holds automatically (because every infinite path will eventually enter a cycle), so in that simplified case we have

\begin{corollary}\label{cor.chr-th4.2}
  For a graph $\Gamma$ with finitely many vertices the full gauge action on $C^*(\Gamma)$ is free if and only if $\Gamma$ is row-finite and has no sinks. 
  \qedhere
\end{corollary}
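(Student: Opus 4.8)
The plan is to deduce the corollary from \Cref{th.chr-th4.2}. That theorem characterizes freeness of the full gauge action on $C^*(\Gamma)$ as the conjunction of three conditions — no sinks, row-finiteness, and the third (``condition (Y)'') bullet — so it remains only to check that, when $\Gamma^0$ is finite, the third condition holds automatically; then just ``row-finite'' and ``no sinks'' survive, while conversely freeness still forces all three and hence those two in particular. (An alternative route, which I would take instead if the correspondence-theoretic dictionary of \Cref{subse.gr-cp} is more convenient at that point, is to apply \Cref{th.cp-iff} to $E(\Gamma)$: under $C^*(\Gamma)\cong\cO_{E(\Gamma)}$, faithfulness of $E(\Gamma)$ amounts to the absence of sinks, finite generation to the finiteness of $\Gamma^1$ — which, since $\Gamma^0$ is finite, is the same as every vertex emitting only finitely many edges, i.e.\ row-finiteness — and the artinian property of $E(\Gamma)$ becomes automatic because $A=C(\Gamma^0)$ is a finite-dimensional $C^*$-algebra, hence has no strictly descending infinite chain of ideals.)

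So the substance is a single combinatorial claim: if $\Gamma^0$ is finite, then every infinite path $p=e_1e_2\cdots$ has a finite sub-path $\alpha$ for which some path $\beta$ with $|\beta|=|\alpha|+1$ has the same target as $\alpha$. First I would note that, $\Gamma^0$ being finite, some vertex $v$ equals $r(e_n)$ for infinitely many $n$; choosing two such indices exhibits a closed sub-path $c=f_1\cdots f_\ell$ of $p$ based at $v$, of some length $\ell\ge1$. The key observation is then that $v$ receives paths of every positive length: given $L\ge1$, write $L=j\ell+r$ with $j\ge0$ and $1\le r\le\ell$, rotate $c$ to the closed path $c_r=f_{\ell-r+1}\cdots f_\ell f_1\cdots f_{\ell-r}$ based at $w=s(f_{\ell-r+1})$, and concatenate to get $c_r^{\,j}f_{\ell-r+1}\cdots f_\ell$, a path of length $L$ ending at $v$. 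Applying this with $L=m+1$, where $m$ is any index with $r(e_m)=v$, and taking $\alpha=e_1\cdots e_m$, produces the desired $\beta$, so condition (Y) holds for $p$.

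The one point I expect to need care is matching the length exactly: condition (Y) demands a path \emph{precisely} one longer than $\alpha$, whereas merely continuing along the tail of $p$ yields paths into $v$ only of lengths congruent to $|\alpha|$ modulo $\ell$. The rotate-and-iterate construction removes this modular obstruction, showing that a single cycle through $v$ already supplies paths into $v$ of \emph{all} positive lengths. Everything else — translating between the graph vocabulary of \Cref{subse.gr-cp} and the hypotheses of \Cref{th.chr-th4.2} (or \Cref{th.cp-iff}) — is routine bookkeeping, and I would just cite those results for both directions of the equivalence.
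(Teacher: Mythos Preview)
Your proposal is correct and matches the paper's approach: the paper deduces the corollary from \Cref{th.chr-th4.2} by the one-line observation that with finitely many vertices every infinite path eventually enters a cycle (so condition (Y) is automatic), and then separately re-derives it from \Cref{th.cp-iff} via the same faithful/FG/artinian dictionary you sketch, invoking \Cref{le.art-gr} for the artinian part. Your combinatorial verification is more detailed than strictly needed --- taking suffixes of an iterated cycle $c^N$ already produces paths of every positive length into $v$, so the rotation step is unnecessary --- but the argument is sound.
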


For finite actions, on the other hand, we have (see \cite[Proposition 4.3]{chr})

\begin{theorem}\label{th.chr-pr4.3}
  For a graph $\Gamma$ the gauge action by $\bZ/k$, $k\ge 2$ is free iff all sinks and infinite emitters are targets of length-$(k-1)$ paths.  
  \qedhere
\end{theorem}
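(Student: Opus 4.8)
The plan is to derive \Cref{th.chr-pr4.3} from \Cref{th.fin} and \Cref{pr.dual-pb}, using the dictionary of \Cref{subse.gr-cp} that identifies $C^*(\Gamma)$ with $\cO_{E(\Gamma)}$ for the correspondence $E(\Gamma) = (C_0(\Gamma^0), E, \varphi)$. By \Cref{pr.dual-pb} it suffices to treat the $C^*$-algebraic freeness (rather than strong gradedness), and \Cref{th.fin} tells us that freeness of the $\bZ/k$-gauge action is equivalent to $\overline{J_E + I_{k-1}} = A$. The work is therefore to translate each side of this identity into graph-theoretic language and to check it is equivalent to ``all sinks and infinite emitters are targets of length-$(k-1)$ paths.''

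First I would unwind the ideals $I_n \subseteq A = C_0(\Gamma^0)$. Since ideals of $C_0(\Gamma^0)$ correspond to subsets of $\Gamma^0$, each $I_n$ is $C_0(S_n)$ for some $S_n \subseteq \Gamma^0$. From the inner product formula $\braket{f|g}(v) = \sum_{r(\gamma)=v}\overline{f(\gamma)}g(\gamma)$ one sees that $I_1 = C_0(\{v : v \text{ receives an edge}\})$, i.e. $S_1$ is the set of non-sources (range-values). Feeding this through the recursion $I_{n+1} = \overline{\mathrm{span}}\braket{e|\varphi(x)f}$ with $\varphi$ given by $(\xi \triangleright f)(\gamma) = \xi(s(\gamma))f(\gamma)$, one gets inductively that $v \in S_{n}$ iff $v$ is the target of a path of length $n$; so $I_{k-1} = C_0(\{\text{targets of length-}(k-1)\text{ paths}\})$. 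Second, I would identify $J_E = \varphi^{-1}(K(E)) \cap \ker(\varphi)^\perp$. Here $\ker\varphi$ is the ideal of functions supported on sources (vertices $v$ with $s^{-1}(v) = \emptyset$), so $\ker(\varphi)^\perp = C_0(\{v : s^{-1}(v) \neq \emptyset\})$, the non-sinks; and $\varphi^{-1}(K(E))$ is the set of $v$ for which $\varphi(\chi_v)$ is compact, which holds exactly when $v$ emits finitely many edges (this is the standard computation identifying, in the graph setting, $\varphi^{-1}(K(E))$ with functions vanishing at infinite emitters — see the discussion around Katsura's correspondence). Hence $J_E = C_0(\{v : v \text{ is not a sink and not an infinite emitter}\})$.

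Putting these together: $\overline{J_E + I_{k-1}}$ is the ideal $C_0(U)$ where $U$ is the union of $\{$non-sinks that are finite emitters$\}$ and $\{$targets of length-$(k-1)$ paths$\}$. This equals all of $A = C_0(\Gamma^0)$ precisely when $U = \Gamma^0$, i.e. when every vertex that is either a sink or an infinite emitter lies in $S_{k-1}$, i.e. is the target of a length-$(k-1)$ path. By \Cref{th.fin} and \Cref{pr.dual-pb} this is exactly freeness of the $\bZ/k$-gauge action, which is \Cref{th.chr-pr4.3}. One caveat: \Cref{th.fin} is stated for \emph{unital} correspondences, i.e. $\Gamma^0$ finite; so strictly speaking this argument recovers \Cref{th.chr-pr4.3} only for graphs with finitely many vertices (consistent with the stated scope in the introduction and with \Cref{cor.chr-th4.2}), and for the general statement one either cites \cite{chr} directly or notes the compatible phrasing.

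The main obstacle I expect is the precise identification of $J_E$ — specifically verifying that $\varphi^{-1}(K(E))$ corresponds to the finite emitters and that the intersection with $\ker(\varphi)^\perp$ behaves as claimed when a vertex both emits edges and, say, fails to receive any. This requires carefully unpacking which rank-one operators $\ket{e}\bra{f}$ on the Hilbert module $E = \{f : \Gamma^1 \to \bC \mid v \mapsto \sum_{r(\gamma)=v}|f(\gamma)|^2 \in C_0(\Gamma^0)\}$ generate $\varphi(\chi_v)$, and noting that $\varphi(\chi_v)$ is the diagonal multiplication by $\chi_{s^{-1}(v)}$ on $E$, which is compact iff $s^{-1}(v)$ is finite. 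Everything else is a routine translation through the $C_0(\Gamma^0)$-ideal/subset correspondence, but this step is where the graph hypotheses (no sinks, row-finiteness appearing implicitly, infinite emitters) enter and must be matched to the algebraic data.
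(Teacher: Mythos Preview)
Your approach is correct and essentially identical to the paper's: the paper cites \Cref{th.chr-pr4.3} from \cite{chr} and then, via \Cref{pr.fin-gr} and \Cref{re.art-gr}, computes $J_E$ and $I_{k-1}$ exactly as you do to show the condition of \Cref{th.fin} matches the graph-theoretic one, with the same caveat that this derivation only covers the finite-$\Gamma^0$ case. One small slip: you call the vertices with $s^{-1}(v)=\emptyset$ ``sources'' where you mean ``sinks,'' but your subsequent identification of $\ker(\varphi)^\perp$ as the non-sinks is correct.
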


A graph has no sinks if and only if the correspondence $E=E(\Gamma)$ of \Cref{subse.gr-cp} has faithful $\varphi:A\to B(E)$ (this is one of the remarks towards the end of \cite[$\S$3.4]{kat-init}). On the other hand, $\Gamma$ has no sinks precisely when $\varphi(A)\subseteq K(E)$ (e.g. \cite[p. 14]{tom-tlk}), which in the context of \Cref{subse.full} means that $1\in K(E)$, i.e. $E$ is FG.

The discussion thus far matches up two of the conditions of \Cref{th.cp-iff} (faithfulness and finite generation) respectively to two of the conditions listed in \Cref{th.chr-th4.2} (no sinks and row-finiteness). In order to recover \Cref{cor.chr-th4.2} from \Cref{th.cp-iff} in the finite-$\Gamma^0$ case, it remains to argue that the third condition of \Cref{th.cp-iff} is satisfied automatically for such graphs.

\begin{lemma}\label{le.art-gr}
The correspondence $E(\Gamma)$ associated to a graph $\Gamma$ with finitely many vertices is artinian in the sense of \Cref{def.artin}. 
\end{lemma}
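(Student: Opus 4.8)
The plan is to unwind \Cref{def.artin} in the concrete setting of $E=E(\Gamma)$ and show that the descending chain $I_0\supseteq I_1\supseteq\cdots$ of ideals of $A=C_0(\Gamma^0)=C(\Gamma^0)$ (the space $\Gamma^0$ being finite, hence compact discrete) must stabilize, simply because $A$ is finite-dimensional. Since $\Gamma^0$ is finite, $A\cong\bC^{\Gamma^0}$ and every ideal of $A$ is of the form $C_0(S)$ for a subset $S\subseteq\Gamma^0$; thus ideals of $A$ are in order-reversing bijection with subsets of $\Gamma^0$, and a descending chain of ideals corresponds to an ascending chain of subsets of the finite set $\Gamma^0$, which necessarily stabilizes. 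So the whole content is just this finite-dimensionality observation, and the artinian condition holds for \emph{any} correspondence over a finite-dimensional $C^*$-algebra.

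Concretely, I would first record that each $I_n$ is a (closed, two-sided, hence in this commutative finite-dimensional case self-adjoint) ideal of $A$, so $I_n=C_0(\Gamma^0\setminus V_n)$ for a uniquely determined subset $V_n\subseteq\Gamma^0$ — equivalently $I_n$ is the span of the vertex projections $p_v$ for $v\notin V_n$. The inclusions $I_0\supseteq I_1\supseteq\cdots$ give $V_0\subseteq V_1\subseteq\cdots$. Since $|\Gamma^0|<\infty$, this ascending chain of subsets is eventually constant, say $V_n=V_N$ for all $n\ge N$; hence $I_n=I_N$ for all $n\ge N$, which is exactly the stabilization required by \Cref{def.artin}. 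Optionally, to make the identification of $I_n$ transparent one can note from the explicit formulas in \Cref{subse.gr-cp} that $I_1=\overline{\mathrm{span}}\{\braket{f|g}\}$ is spanned by those $p_v$ with $r^{-1}(v)\ne\emptyset$, and inductively $I_{n+1}$ is spanned by the $p_v$ reachable by a path of length $n+1$ from a vertex in (the support of) $I_n$; but this combinatorial description is not needed for the proof and I would keep it to at most a sentence.

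There is essentially no obstacle here: the only thing to be careful about is the (routine) verification that each $I_n$ really is a closed two-sided ideal of $A$ so that the correspondence with subsets of $\Gamma^0$ applies — this follows immediately from \Cref{def.artin} since $I_0=A$ is an ideal and the recursive step produces a closed linear span of elements $\braket{e|\varphi(x)f}\in A$, which is visibly invariant under multiplication by $A$ on either side (using that $\varphi(a)$ and multiplication by $a$ are adjointable and that $\braket{e|\varphi(x)(f\triangleleft a)}=\braket{e|\varphi(x)f}\triangleleft a$, etc.). Given the finiteness of $\Gamma^0$ there is nothing deeper to do, so the write-up should be just a few lines.
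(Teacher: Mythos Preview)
Your proposal is correct and matches the paper's own proof essentially verbatim: the paper simply observes that the lattice of ideals of $C_0(\Gamma^0)$ is the lattice of subsets of the finite set $\Gamma^0$, so any descending chain stabilizes. Your additional remarks (that each $I_n$ is an ideal, and the combinatorial description of $I_n$ as supported on vertices receiving length-$n$ paths) are fine but unnecessary for the argument; the latter is exactly what the paper records separately in \Cref{re.art-gr}.
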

\begin{proof}
  This is immediate: the lattice of ideals of $C_0(\Gamma^0)$ is isomorphic to the lattice of subsets of $\Gamma^0$, and when the latter is finite the lattice is too.
\end{proof}

\begin{remark}\label{re.art-gr}
  This was not necessary in the proof of \Cref{le.art-gr}, but it is perhaps worth noting that for arbitrary $\Gamma$ the ideal $I_n$ from \Cref{def.artin} is $C_0(S_n)\subset C_0(\Gamma^0)$, where $S_n\subseteq \Gamma^0$ is the set of vertices that are targets for paths of length $\ge n$.  
\end{remark}

In conclusion, \Cref{cor.chr-th4.2} is a consequence of \Cref{th.cp-iff}. We next turn to the finite-action case. Here, it turns out that the necessary and sufficient conditions for freeness in \Cref{th.fin} are equivalent without finiteness assumptions.

\begin{proposition}\label{pr.fin-gr}
  Let $\Gamma$ be an arbitrary graph, $k\ge 2$ a positive integer and $(A,E,\varphi)$ the correspondence associated to it. The latter satisfies the condition \Cref{eq:k-1} of \Cref{th.fin} if and only if all sinks and infinite emitters receive paths of length $k-1$
\end{proposition}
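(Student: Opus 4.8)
The plan is to translate the condition \Cref{eq:k-1}, namely $\overline{J_E+I_{k-1}}=A$ for the correspondence $(A,E,\varphi)=E(\Gamma)$, into a concrete statement about the graph, using the explicit description of all the ingredients recorded in \Cref{subse.gr-cp}. Here $A=C_0(\Gamma^0)$ and ideals of $A$ correspond bijectively to subsets of $\Gamma^0$ (equivalently, to the closed subsets of the discrete space $\Gamma^0$, but discreteness makes every subset closed); an equality of a sum of two ideals with $A$ is the statement that the corresponding subsets of $\Gamma^0$ \emph{cover} $\Gamma^0$. So the whole proof reduces to: identify the vertex subset corresponding to $J_E$, identify the one corresponding to $I_{k-1}$, and check that their union is all of $\Gamma^0$ precisely when every sink and every infinite emitter is the target of a path of length $k-1$.

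First I would pin down $I_{k-1}$. By \Cref{re.art-gr}, for the graph correspondence the ideal $I_n\subseteq C_0(\Gamma^0)$ equals $C_0(S_n)$, where $S_n\subseteq\Gamma^0$ is the set of vertices receiving paths of length $\ge n$; equivalently (since paths can be truncated from the source end) $S_n$ is the set of vertices receiving a path of length exactly $n$. I would either cite \Cref{re.art-gr} directly or re-derive it by the same recursion used in \Cref{def.artin}: $\braket{e|f}$ for $e,f\in E$ spans $C_0(\{v : r^{-1}(v)\neq\varnothing\})$, i.e. the vertices receiving at least one edge, and $\braket{e|\varphi(x)f}$ with $x$ supported on $S$ picks out those $v$ admitting an edge $\gamma$ with $r(\gamma)=v$ and $s(\gamma)\in S$; inductively this yields $S_n$ as claimed. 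Thus $I_{k-1}\leftrightarrow S_{k-1}=\{v\in\Gamma^0 : v\text{ receives a path of length }k-1\}$.

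Next I would identify $J_E=\varphi^{-1}(K(E))\cap\ker(\varphi)^\perp$ as an ideal of $C_0(\Gamma^0)$, again by unwinding the definitions of $\triangleright$ and $\braket{\,|\,}$. For a function $\xi\in C_0(\Gamma^0)$, the operator $\varphi(\xi)$ acts on $f\in E$ by $(\xi\triangleright f)(\gamma)=\xi(s(\gamma))f(\gamma)$, so $\ker\varphi$ consists of functions vanishing on the set of vertices that emit at least one edge (the non-sinks); its orthogonal complement $\ker(\varphi)^\perp$ is then the ideal $C_0(\{v : v\text{ emits an edge}\})=C_0(\Gamma^0\setminus\{\text{sinks}\})$. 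For the other factor: $\varphi(\xi)\in K(E)$ for $\xi$ supported at $v$ amounts to the rank condition familiar from graph-algebra theory — it holds exactly when $v$ emits only finitely many edges; equivalently $\varphi^{-1}(K(E))=C_0(\{v : v\text{ is not an infinite emitter}\})$ (I would cite \cite[p.~14]{tom-tlk} or the discussion in \cite[\S3.4]{kat-init} as is done in \Cref{subse.prior}, since this is exactly the "no sinks $\Leftrightarrow \varphi(A)\subseteq K(E)$" remark localized one vertex at a time). Intersecting, $J_E\leftrightarrow \{v\in\Gamma^0 : v\text{ is neither a sink nor an infinite emitter}\}$.

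Finally I assemble the pieces. We have $\overline{J_E+I_{k-1}}=A$ iff the subsets $\{v : v\text{ not a sink, not an infinite emitter}\}$ and $S_{k-1}$ together exhaust $\Gamma^0$, i.e. iff every vertex that \emph{is} a sink or an infinite emitter lies in $S_{k-1}$, i.e. iff every sink and every infinite emitter receives a path of length $k-1$. That is precisely the stated condition. The only slightly delicate point — and the step I expect to require the most care — is the identification $\varphi^{-1}(K(E))\leftrightarrow\{v : v\text{ not an infinite emitter}\}$: one must check that "$\varphi(\chi_{\{v\}})$ compact" is genuinely equivalent to $s^{-1}(v)$ being finite, rather than merely implied by it, and that this localizes correctly across vertices (an ideal $C_0(V)$ maps into $K(E)$ iff each $v\in V$ emits finitely many edges). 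This is standard but worth spelling out; everything else is the routine dictionary between ideals of $C_0(\Gamma^0)$ and subsets of $\Gamma^0$ together with the combinatorial bookkeeping of path-receiving sets already present in \Cref{re.art-gr}.
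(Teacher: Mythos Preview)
Your proposal is correct and follows essentially the same route as the paper's proof: identify the vertex subsets underlying $J_E$ and $I_{k-1}$ via the ideals--subsets dictionary for $C_0(\Gamma^0)$, then read off \Cref{eq:k-1} as the covering condition. The paper is terser---it simply quotes $\varphi^{-1}(K(E))=C_0(\text{finite emitters})$ and $\ker(\varphi)=C_0(\text{sinks})$ from \cite[p.~14]{tom-tlk} and $I_{k-1}=C_0(S_{k-1})$ from \Cref{re.art-gr}---whereas you also sketch why these identifications hold; but the argument is the same.
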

\begin{proof}
  For correspondences associated to graphs we have
  \begin{equation*}
    \varphi^{-1}(K(E)) = C_0(\text{set of finite emitters}),\ \ker(\varphi)=C_0(\text{set of sinks})
  \end{equation*}
  (see for instance \cite[p.14]{tom-tlk}). 
  It then follows that the spectrum of $J_E$ is the set of vertices in $\Gamma^0$ that are both finite emitters and {\it not} sinks.

  Since on the other hand the spectrum of $I_{k-1}$ is the set of vertices receiving paths of length $\ge k-1$ (\Cref{re.art-gr}), condition \Cref{eq:k-1} means precisely that all vertices that are either sinks or infinite emitters receive such a path.
\end{proof}

Once more, this means that \Cref{th.chr-pr4.3} follows (for graphs with finitely many vertices) from \Cref{th.fin}.


\section*{Acknowledgments}

This work is partially supported by NSF grant DMS-1801011, and part of the project supported by the grant H2020-MSCA-RISE-2015-691246-QUANTUM DYNAMICS, grant number 3542/H2020/2016/2.

I am grateful for numerous enlightening conversations with S\o ren Eilers, Piotr Hajac, Benjamin Passer and Mariusz Tobolski on the contents of this paper and related topics that are the subject of future joint work.



\begin{thebibliography}{10}

\bibitem{freeness}
Paul~F. Baum, Kenny De~Commer, and Piotr~M. Hajac.
\newblock Free actions of compact quantum groups on unital {$C^*$}-algebras.
\newblock {\em Doc. Math.}, 22:825--849, 2017.

\bibitem{ell}
David~Alexandre Ellwood.
\newblock A new characterisation of principal actions.
\newblock {\em J. Funct. Anal.}, 173(1):49--60, 2000.

\bibitem{flr}
Neal~J. Fowler, Marcelo Laca, and Iain Raeburn.
\newblock The {$C^*$}-algebras of infinite graphs.
\newblock {\em Proc. Amer. Math. Soc.}, 128(8):2319--2327, 2000.

\bibitem{kat-init}
Takeshi Katsura.
\newblock A construction of {$C^*$}-algebras from {$C^*$}-correspondences.
\newblock In {\em Advances in quantum dynamics ({S}outh {H}adley, {MA}, 2002)},
  volume 335 of {\em Contemp. Math.}, pages 173--182. Amer. Math. Soc.,
  Providence, RI, 2003.

\bibitem{kat-alg}
Takeshi Katsura.
\newblock On {$C^*$}-algebras associated with {$C^*$}-correspondences.
\newblock {\em J. Funct. Anal.}, 217(2):366--401, 2004.

\bibitem{kat-ideal}
Takeshi Katsura.
\newblock Ideal structure of {$C^*$}-algebras associated with
  {$C^*$}-correspondences.
\newblock {\em Pacific J. Math.}, 230(1):107--145, 2007.

\bibitem{kpr}
Alex Kumjian, David Pask, and Iain Raeburn.
\newblock Cuntz-{K}rieger algebras of directed graphs.
\newblock {\em Pacific J. Math.}, 184(1):161--174, 1998.

\bibitem{chr}
L.~{Orloff Clark}, R.~{Hazrat}, and S.~W. {Rigby}.
\newblock {Strongly graded groupoids and strongly graded Steinberg algebras}.
\newblock {\em ArXiv e-prints}, November 2017.

\bibitem{phi-equivk}
N.~Christopher Phillips.
\newblock {\em Equivariant {$K$}-theory and freeness of group actions on
  {$C^*$}-algebras}, volume 1274 of {\em Lecture Notes in Mathematics}.
\newblock Springer-Verlag, Berlin, 1987.

\bibitem{phi-free}
N.~Christopher Phillips.
\newblock Freeness of actions of finite groups on {$C^*$}-algebras.
\newblock In {\em Operator structures and dynamical systems}, volume 503 of
  {\em Contemp. Math.}, pages 217--257. Amer. Math. Soc., Providence, RI, 2009.

\bibitem{pim}
Michael~V. Pimsner.
\newblock A class of {$C^*$}-algebras generalizing both {C}untz-{K}rieger
  algebras and crossed products by {${\bf Z}$}.
\newblock In {\em Free probability theory ({W}aterloo, {ON}, 1995)}, volume~12
  of {\em Fields Inst. Commun.}, pages 189--212. Amer. Math. Soc., Providence,
  RI, 1997.

\bibitem{sch}
Hans-J\"urgen Schneider.
\newblock Principal homogeneous spaces for arbitrary {H}opf algebras.
\newblock {\em Israel J. Math.}, 72(1-2):167--195, 1990.
\newblock Hopf algebras.

\bibitem{seg}
Graeme Segal.
\newblock Equivariant {$K$}-theory.
\newblock {\em Inst. Hautes \'Etudes Sci. Publ. Math.}, (34):129--151, 1968.

\bibitem{s-w03}
Wojciech Szyma\'nski.
\newblock Quantum lens spaces and principal actions on graph {$C^*$}-algebras.
\newblock In {\em Noncommutative geometry and quantum groups ({W}arsaw, 2001)},
  volume~61 of {\em Banach Center Publ.}, pages 299--304. Polish Acad. Sci.
  Inst. Math., Warsaw, 2003.

\bibitem{tom-tlk}
Mark Tomforde.
\newblock Graph {$C^*$}-algebras as {C}untz-{P}imsner algebras.
\newblock available on author's website; accessed 2018-05-18.

\bibitem{tu-eq}
Loring~W. Tu.
\newblock What is {$\dots$} equivariant cohomology?
\newblock {\em Notices Amer. Math. Soc.}, 58(3):423--426, 2011.

\bibitem{wo}
N.~E. Wegge-Olsen.
\newblock {\em {$K$}-theory and {$C^*$}-algebras}.
\newblock Oxford Science Publications. The Clarendon Press, Oxford University
  Press, New York, 1993.
\newblock A friendly approach.

\end{thebibliography}
\bibliographystyle{plain}

\def\polhk#1{\setbox0=\hbox{#1}{\ooalign{\hidewidth
  \lower1.5ex\hbox{`}\hidewidth\crcr\unhbox0}}}
  \def\polhk#1{\setbox0=\hbox{#1}{\ooalign{\hidewidth
  \lower1.5ex\hbox{`}\hidewidth\crcr\unhbox0}}}
  \def\polhk#1{\setbox0=\hbox{#1}{\ooalign{\hidewidth
  \lower1.5ex\hbox{`}\hidewidth\crcr\unhbox0}}}
  \def\polhk#1{\setbox0=\hbox{#1}{\ooalign{\hidewidth
  \lower1.5ex\hbox{`}\hidewidth\crcr\unhbox0}}}
  \def\polhk#1{\setbox0=\hbox{#1}{\ooalign{\hidewidth
  \lower1.5ex\hbox{`}\hidewidth\crcr\unhbox0}}}

\end{document}